\documentclass[11pt,a4paper,reqno]{amsart}
\usepackage{amsmath,amssymb,graphics,epsfig,color,enumerate}
\usepackage{dsfont}  
\usepackage{verbatim}
\usepackage{epstopdf}

\textwidth= 14. cm
 \definecolor{refkey}{gray}{0.8}
 \definecolor{labelkey}{gray}{0.8}
\newtheorem{Theorem}{Theorem}[section]

\newtheorem{Lemma}[Theorem]{Lemma}
\newtheorem{Proposition}[Theorem]{Proposition}
\newtheorem{Corollary}[Theorem]{Corollary}
\newtheorem{remark}[Theorem]{Remark}

\newtheorem{definition}[Theorem]{Definition}
\newtheorem{conjecture}[Theorem]{Conjecture}

\newtheorem{assumptionA}{Assumption}

\definecolor{light}{gray}{.9}


\newcommand{\cD}{\ensuremath{\mathcal D}}

\newcommand{\cL}{\ensuremath{\mathcal L}}


\newcommand{\bbE}{{\ensuremath{\mathbb E}} }

\newcommand{\bbP}{{\ensuremath{\mathbb P}} }

\newcommand{\bbR}{{\ensuremath{\mathbb R}} }

\newcommand{\bbZ}{{\ensuremath{\mathbb Z}} }

%
%
\let\a=\alpha    \let\d=\delta  \let\e=\varepsilon
 \let\g=\gamma     \let\k=\kappa  \let\l=\lambda
            
   \let\t=\tau

\let\O=\Omega      

%

%

%


\begin{document}
 
\title[A class of random walks in reversible environments]{A class of random walks in reversible   {dynamic} environments: antisymmetry and applications to the East model}
%
%

\author{L. Avena}
\address{Luca Avena. Mathematisch instituut
Universiteit Leiden. Postbus 9512
2300 RA Leiden,
The Netherlands}
\email{l.avena@math.leidenuniv.nl}
\author{O. Blondel}
\address{Oriane Blondel. CNRS, Univ. Lyon, CNRS UMR 5208, Institut Camille Jordan, 43 blvd. du 11 novembre 1918, F-69622 Villeurbanne cedex, France}
\email{blondel@math.univ-lyon1.fr}
\author{A. Faggionato}
\address{Alessandra Faggionato. Dipartimento di Matematica, Universit\`a di Roma La Sapienza.
  P.le Aldo Moro 2, 00185 Roma, Italy}
\email{faggiona@mat.uniroma1.it}

\begin{abstract} 
We introduce via perturbation a class of random walks in reversible dynamic environments having a spectral gap. In this setting one can apply the mathematical results derived in \cite{mainprobe}. As   { first results}, we show that the asymptotic velocity is antisymmetric in the perturbative parameter   { and, for a subclass of random walks, we characterize  the velocity and a stationary distribution of the  environment seen from the walker   as suitable series in the perturbative parameter}. We then consider as a special case a random walk on the East model that tends to follow dynamical interfaces between empty and occupied regions. We study the asymptotic velocity and 
density profile for the environment seen from the walker. In particular, we determine the sign of the velocity when the density of the underlying East process is not $1/2$, and we discuss the appearance of a drift in the balanced setting given by density $1/2$. 
\end{abstract}

\maketitle


\section{Introduction}
In  \cite{mainprobe}  we studied continuous-time random walks in dynamic random environments on the $d-$dimensional integer lattice, $d\geq 1$, in a perturbative regime. 
More precisely, we considered a stationary  Feller Markov process, playing the role of the environment  and satisfying the Poincar\'e inequality\footnote{For a reversible process,  the Poincar\'e inequality   is equivalent to the positive   spectral gap of the generator.}. In addition, we considered 
 a random walk with  transition rates given by    functions of the (autonomously) evolving environment.
The  main assumption required  that the random walk is a small perturbation either of an  homogeneous or of a ``stationary" walk, the latter meaning that  the   environment viewed from the walker has the same stationary distribution as   the environment itself.  
 In this setting, we characterized the ergodic behavior of the   environment viewed  from  the walker, and we derived a law of large numbers (i.e. existence of an asymptotic non random velocity)  and an invariance principle (i.e. gaussian fluctuations under diffusive rescaling)  for the random walk. One main tool there 
was   the  derivation and a careful analysis of a   series expansion of Dyson--Phillips type  for the semigroup associated with  the environment as seen from the walker.  We  review in Section \ref{contratto} the main results of \cite{mainprobe} that will be  used in the rest  of the paper (cf. in particular Theorem \ref{chorale} below). 

 
We aim here to    illustrate how the results of \cite{mainprobe} can give non-trivial information about  random walks in dynamic random environments, beyond their  diffusive behavior. Environments here will be reversible stochastic    particle systems on $\bbZ^d$  with a positive spectral gap (in particular, the environment at a given time is a configuration in $\{0,1\}^{\bbZ^d}$).   We first introduce in Section \ref{maritozzo}  a  class of random walks with transition rates satisfying suitable algebraic identities  and show 
a hidden antisymmetry relation in the asymptotic velocity (cf. Theorem \ref{guepe}). More precisely, if $v(\e)$ is the asymptotic velocity  at  perturbative parameter $\e$, we have the antisymmetry relation
\begin{equation*}
v(\e)=-v(-\e).
\end{equation*}
As discussed in Sections~\ref{maritozzo} and ~\ref{sec:east}, this is not a consequence of obvious symmetries in the system. A special example of random walk in the above mentioned class is given by   what we call ``$\e$-RW'', a one-dimensional random walk with drift $2\e$ (resp. $-2\e$) on top of particles (resp. on empty sites). This type of random walk (with different drifts on top of empty/occupied sites) 
has been recently studied in \cite{AFJV,AdHR1,AdHR2,ASV,HdHSST,dHKS,dHS,HS,MV} for different choices of environments. The interest is due to the fact that (in case of opposing drifts) it represents one of the simplest example of a random walk with space-time inhomogeneous random transitions that can give rise to some  slow-down or ``trapping" effects (cf. \cite{AT})
 similar to the well known phenomenology in 1-dimensional static random environment \cite{BG,S,Z}. 
Under our assumptions, the trapping effect does not occur on the diffusive scale, but the $\e$-RW favors spending time oscillating between a particle and a hole, and so tends to lie at interfaces between occupied/empty regions. Its behavior is therefore connected to space-time correlations in the environment, which can be difficult to grasp. We first derive two main results for generic  $\e$--RW's: a deeper analysis of the series expansion for its asymptotic velocity $v(\e)$ (see Proposition  \ref{signature}) and for the limiting distribution of the environment viewed from the walker (see Proposition \ref{denso}). 
 
We then study in more detail the $\e$-RW on the East model. The latter has been introduced in the physics literature  as a simplified  model for glassy systems \cite{JE}, and belongs to the class of kinetically constrained model \cite{BB}.  It has  received much attention within the physics and mathematics communities, since it catches some relevant features of glassy dynamics as  e.g. aging, dynamical heterogeneity, huge relaxation times (cf. \cite{CMRT,CFM,FMRT, eastrecent,SE1} and references therein). Of particular interest to both the physics and mathematics communities is the structure of the space-time correlated ``bubbles'' of occupied sites (see Figure~\ref{EastFA}) and tracing the $\e$--RW on the East model allows to catch some information on these bubbles. We stress that the East model  has a positive spectral gap  \cite{AD,CMRT} but does not display any uniform mixing property or attractiveness. Therefore we can only use the results of \cite{mainprobe} and not for instance those of \cite{AdHR1,AdHR2, RV}.

 For the $\e$-RW on the East model, we discuss evidence of a negative asymptotic  drift in the balanced case of density $1/2$ and  give two  theoretical results supporting this fact in addition to simulations (cf. Propositions \ref{shaun} and \ref{DegDrift}).
It is tempting to interpret the sign of the asymptotic velocity as a signature of the orientation of the East model, but indeed we can show that the velocity remains the same if one replace the East model by the West model, which has the opposite orientation (cf.  Corollary \ref{EW}). Finally, in Corollary \ref{oscar} we give a detailed analysis of the density profile of the limiting distribution of the East model viewed from the $\e$--RW.

Let us notice that the study of the $\e$--RW on the East model  was partially inspired by  \cite{JKGC}, where the authors consider  random walks on the FA1f model (the symmetric version of the East model). 
An  investigation  based on the expansion derived in \cite{mainprobe} might be performed as well  for other types of random walks as e.g.  the ones considered in \cite{JKGC}. 
A further  study of random walks in kinetically constrained models  is given in  \cite{O3}.

  Finally, we mention that  the negative drift for the $\e$--RW on the East model and  our qualitative analysis of the   density profile of the East model  viewed from the $\e$--RW  are supported by numerical simulations performed by Philip Thomann.

\section{Preliminaries}\label{contratto}
\subsection{Definitions}

We consider a   {Feller}  Markov process $(\xi_t)_{t\geq 0}$ on $\O:=\{0,1\}^{\bbZ^d}$ with generator $\mathcal{L}_{\rm env}$ which can be thought of as an interacting particle system,   {playing the role of dynamic
  random environment}. For $(x,t)\in\bbZ^d\times[0,\infty)$, if $\xi_t(x)=1$ we say that there is a particle at time $t$ at position $x$, else site $x$ is considered empty at time $t$   {(equivalently, there is a hole at $x$ at time $t$)}. See \cite{L} for a standard reference on this type of Markov processes. We write $\bbE^\mathrm{env}_\xi$ for the expectation w.r.t. the law of the environment started from $\xi$ and $\tau_x$ for the translation operator on $\bbZ^d$ such that $\tau_x\xi(y)=\xi(x+y)$ for $x,y\in\bbZ^d, \xi\in\O$.

\begin{assumptionA}\label{piqure} We assume the following properties for the dynamic environment:
\begin{itemize}
\item[(i)](Reversibility) $(\xi_t)_{t\geq 0}$ admits a reversible translation invariant probability measure $\mu$ on $\O$.
\item[(ii)](Positive spectral gap) The generator $L_{\mathrm{env}}$ has $0$ as simple eigenvalue and the rest of its spectrum is in $[\gamma,+\infty)$ for some $\gamma>0$.
\item[(iii)] The Markov semigroup $S_{\mathrm{env}}(t)$, with $S_{\mathrm{env}}(t)f(\xi):=\bbE_\xi^{\mathrm{env}}[f(\xi_t)]$, commutes with spatial translations, {i.e.} $S_{\mathrm{env}}(t)(f\circ\tau_x)=(S_{\mathrm{env}}(t)f)\circ\tau_x$ $\mu$--a.s. for any local function $f$ and $x\in\bbZ^d$.
\end{itemize}  
\end{assumptionA}

We point out that Assumption~\ref{piqure}-(ii) is equivalent to the so-called Poincar\'e inequality:
$\g \|f\|^2 \leq - \mu ( f \mathcal{L}_\mathrm{env}f ) $  for all $ f \in \cD (\mathcal{L}_\mathrm{env})$ with $  \mu(f)=0$,
where $\cD (\mathcal{L}_\mathrm{env})$ denotes the domain of the operator $\mathcal{L}_\mathrm{env}$. 

We interpret the process $(\xi_t)_{t\geq 0}$ as a   {dynamic random environment} for a continuous-time random walk  $(X_t^{(\e)})_{t\geq 0}$ on $\bbZ^d$ which starts at the origin and that we now define. The rate for a jump from $x$ to   {$x+y$} when the environment is equal to $\xi$ will be denoted by $r_\e(y,\tau_x\xi)$. Here, $\e$ is a perturbative parameter, whose precise meaning we explicit in    {Subsection \ref{3} below}.

\begin{assumptionA}\label{lione}We assume that for suitable functions $r(y,\eta)\geq 0$, $\hat{r}_\e(y,\eta)$ with finite support in $\eta$ and finite range in $y$\footnote{More precisely, we assume that there exists $R$ such that for $|y|\geq R$, $r(y,\cdot)\equiv 0$ and $\hat{r}_\e(y,\cdot)\equiv 0$ and for all $y$, $r(y,\cdot),\hat{r}_\e(y,\cdot)$ have finite support.}, the jump rates admit the decomposition
\[r_\e(y,\eta)=r(y,\eta)+\hat{r}_\e(y,\eta).\]
Moreover we assume that 
\begin{equation}\label{revrate}
r(y,\eta)=r(-y,\tau_y\eta).
\end{equation}
\end{assumptionA}

Note that $\hat{r}_\e$ should be considered as a perturbative contribution to the transition rates $r_\e$, so that $r(y,\eta)$ can be thought of as the transition rates for an unperturbed random walk. Then, since $\mu$ is translation invariant, the last assumption \eqref{revrate} is the detailed balance condition and is equivalent to the reversibility of $\mu$ for the environment seen from the unperturbed walker. 
 
Due to dependence on the environment, such a random walk is not Markovian itself, but 
the joint process $(\xi_t, X_t^{(\e)})_{t\geq 0}$ on state space $\O\times \bbZ^d$ is a Markov process with generator  
\begin{equation}\begin{aligned}\label{RWRE}
\mathcal{L}_{\rm rwre}^{(\e)} f(\xi,x) &:= \mathcal{L}_{\rm env} f(\xi,x)+ \sum_{y \in \bbZ^d}r_\e(y,\tau_{x}\xi) \big[f(\xi,x+y)-f(\xi, x)\big],
\end{aligned}\end{equation} 
where the operator $\mathcal{L}_{\rm env}$ acts only on the first coordinate of $f$.

Later, we will consider more closely the following one-dimensional special case, which we call $\e$-RW.

\begin{center}
\begin{figure}
\includegraphics[scale=.3]{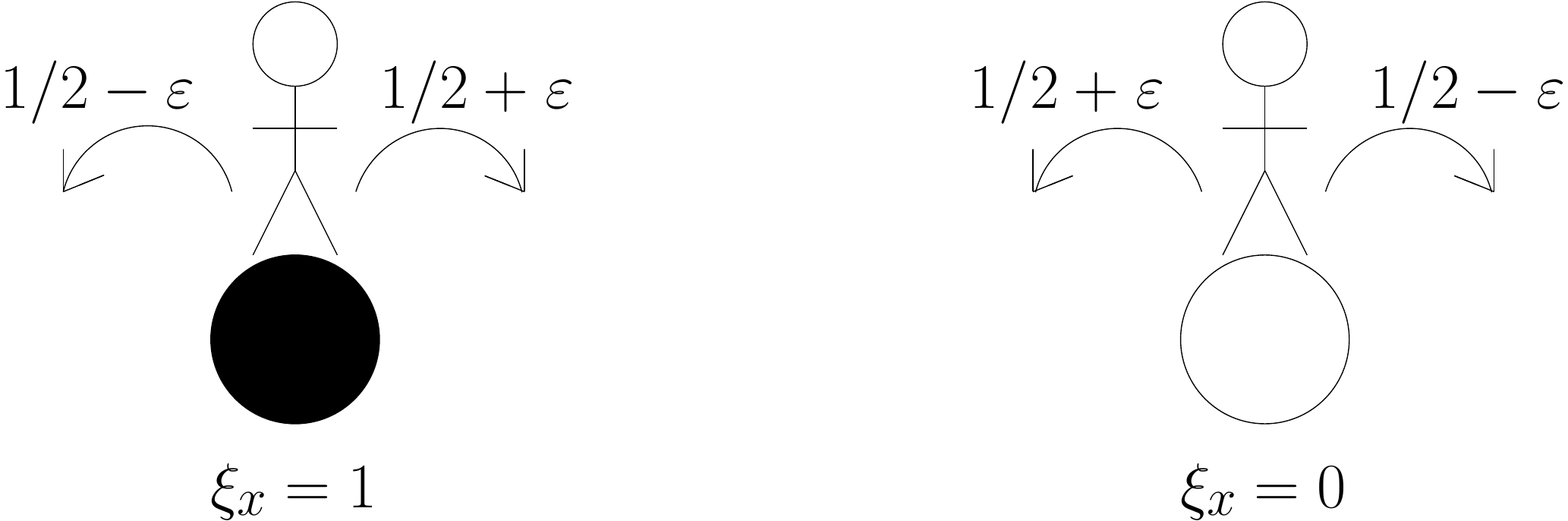}
\caption{A summary of the jump rates for the $\e$--RW.}
\label{ansatz}
\end{figure}
\end{center}

\begin{definition}\label{RWp}{\bf ($\e$-RW)}
For $\xi\in\{0,1\}^\bbZ$ and fixed $\e\in[-1/2,1/2]$, set (see Figure~\ref{ansatz})
\begin{equation}\label{pRate}
  {r_\e(y,\xi)}:=
\left\{
\begin{array}{cl}
1/2+\e(2\xi(0)-1) & \text{if} \quad y=+1\,,\\
1/2-\e(2\xi(0)-1) & \text{if} \quad y=-1\,,\\
0&\text{otherwise.}
\end{array}
\right.
\end{equation}
\end{definition}

Here, the perturbative role of $\e$ is clear, and in this case, the unperturbed random walk ($\e=0$) is the simple symmetric random walk.

\subsection{Environment seen by the walker}\label{3}
One of the most common approach to study random motion in random media is to analyze the so called {\em environment seen by the walker}, that is, the Markov process $(\eta^{(\e)}_t)_{t\geq 0}$ with state space $\O$ defined by $\eta_t^{(\e)}:=\tau_{X^{(\e)}_t}\xi_t$, with associated generator given by 
\begin{equation}\begin{aligned}\label{EnvFromRW}
\mathcal{L}_{\rm ew}^{(\e)} f(\eta) := \mathcal{L}_{\rm env}f(\eta)+\sum_{y \in \bbZ^d}r_\e(y,\eta)\big[f(\tau_{y}\eta)-f(\eta)\big]\,,\;\; \eta\in\O\,.
\end{aligned}
\end{equation}
Note that the jumps of the walker in \eqref{RWRE} turn into spatial--shifts for the environment seen by the walker. We write $(S_\e(t))_{t\geq 0}$ for the semigroup associated with this Markov process $(\eta_t^{(\e)})_{t\geq 0}$. When $\e=0$ we simply write $S(t)$.

In the following theorem we recall some results from \cite{mainprobe} that are relevant to our discussion. 
We set
\begin{equation}\label{Lpe}\hat L_\e f(\eta):=\mathcal{L}_{\rm ew}^{(\e)}-\mathcal{L}_{\rm ew}^{(0)}=\sum_{y \in \bbZ^d}\hat{r}_\e(y,\eta)\big[f(\tau_{y}\eta)-f(\eta)\big] .\end{equation}
Due to our assumptions, $\hat{L}_\e$ has bounded norm $\|\hat{L}_\e\|$ as operator in $L^2(\mu)$.
  {For example, for the $\e$--RW the operator $\hat L_\e$ is given by
\begin{equation} \label{michele}
\hat L_\e f (\eta)= \e ( 2 \eta(0)-1)\bigl[ f( \t_1 \eta)- f(\t_{-1} \eta)\bigr]\,.
\end{equation}}
  {
Note that in this case $\|\hat{L}_\e\|$ is bounded from above  by $2\epsilon$. }

  {
As the reader will see, our results  hold for $\epsilon$ such that $\|\hat{L}_\e\|< \gamma$ 
(cf. Assumption  \ref{piqure}--(ii)).  If, as in the examples discussed below, $ \|\hat{L}_\e\|=C \e $, 
this trivially means that we restrict to $\epsilon $ small. Since  interesting perturbations are not necessarely
explicitly linear in the perturbative parameter, we keep the more general condition $\|\hat{L}_\e\|< \gamma$.}

\begin{Theorem}\label{chorale}{\cite{mainprobe}}
Under Assumptions \ref{piqure} and \ref{lione} and further assuming that    { $\|\hat{L}_\e\|<\gamma$}, the following holds:
\begin{itemize}
\item[(i)] The process $(\eta^{(\e)}_t)_{t\geq 0}$ admits a unique probability measure $\mu_\e$ which is invariant and absolutely continuous w.r.t. $\mu$.   {Moreover, $\mu_\e$ is time ergodic}.  The distribution of $\eta^{(\e)}_t$ converges to $\mu_\e$ as $t\rightarrow\infty$ if the distribution of $\eta_0^{(\e)}$ is absolutely continuous w.r.t. $\mu$. 
\item[(ii)]If in addition $r(y,\eta)>0\Rightarrow r_\e(y,\eta)>0$,   {then} $\mu_\e$ and $\mu$ are mutually absolutely continuous.
\item[(iii)]The measure $\mu_\e$ admits the following representation: for every $f\in L^2(\mu)$
  \begin{equation}\label{muinfinity_bis}
\mu_\e(f) = \mu(f)+\sum _{n=0}^\infty \int_0^\infty \mu\left( \hat L_\e S_\e^{(n)}(s)f\right) ds\,,
\end{equation}
where the integrals and the series are absolutely convergent and the operators $S_\e^{(n)}(s)$, $n\geq 0$, are defined iteratively as 
\begin{equation}\label{Sn}
S_\e^{(0)}(t)f:= S(t)f, \quad S_\e^{(n+1)}(t)f:=\int_0^t  S(t-s) \hat L_\e S_\e^{(n)}(s)f ds \,.
\end{equation} 
Moreover,   {it holds 
\begin{equation}\label{ikea} \Big|\int_0^\infty \mu\left( \hat L_\e S_\e^{(n)}(s)f \right) ds\Big |
\leq   { (\| \hat L_\e \| }/\g)^{n+1} \| f-\mu(f) \|\,.
\end{equation} }
\item[(iv)] For $\eta\in\O,$ we introduce the local drift $j^{(\e)}(\eta):=\sum_y yr_\e(y,\eta)$ and set $v(\e):= \mu_\e (j^{(\e)})$. Then for $\mu_\e$--a.e. $\xi$
\begin{equation}\label{velo}
\frac{1}{t}X^{(\e)}_t\underset{t\rightarrow\infty}{\longrightarrow}v(\e)\quad a.s.
\end{equation}
In particular, $v(\e)$ can be written as
\begin{equation}\label{Expvelo}
v(\e)=\mu(j^{(\e)})+\sum_{n=0}^\infty\int_0^\infty \mu( \hat L_\e S_\e^{(n)}(s)j^{(\e)}  )ds\,.
\end{equation}
\end{itemize}
\end{Theorem}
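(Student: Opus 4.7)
The plan is to treat $\mathcal{L}_{\rm ew}^{(\e)} = \mathcal{L}_{\rm ew}^{(0)} + \hat L_\e$ as a bounded perturbation of the reversible operator $\mathcal{L}_{\rm ew}^{(0)}$ and exploit a spectral gap of the unperturbed generator via a Dyson--Phillips expansion of the semigroup $S_\e(t)$. The first step is to check that $\mu$ is itself invariant and reversible for $\mathcal{L}_{\rm ew}^{(0)}$. An integration-by-parts on $L_{\rm walk}^{(0)}f(\eta) := \sum_y r(y,\eta)[f(\tau_y\eta)-f(\eta)]$, using translation invariance of $\mu$ together with the detailed-balance identity \eqref{revrate}, gives $\mu(g\, L_{\rm walk}^{(0)} f) = \mu(f\, L_{\rm walk}^{(0)} g)$, so that $L_{\rm walk}^{(0)}$ is self-adjoint and negative semidefinite in $L^2(\mu)$. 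Adding $\mathcal{L}_{\rm env}$ and invoking Assumption~\ref{piqure}-(ii) therefore yields $-\mu(f \mathcal{L}_{\rm ew}^{(0)} f) \geq \gamma \|f\|^2$ for every $f \in L^2_0(\mu) := \{f : \mu(f)=0\}$, whence the unperturbed Markov semigroup satisfies $\|S(t)f\| \leq e^{-\gamma t}\|f\|$ on $L^2_0(\mu)$.

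Next I would write Duhamel's formula
\[
S_\e(t)f = S(t)f + \int_0^t S(t-s)\hat L_\e S_\e(s)f\, ds
\]
and iterate it to obtain $S_\e(t) = \sum_{n \geq 0} S_\e^{(n)}(t)$ with the operators $S_\e^{(n)}$ of \eqref{Sn}. The identity $\mu S(t) = \mu$ gives, for $n \geq 1$, $\mu S_\e^{(n)}(t)f = \int_0^t \mu(\hat L_\e S_\e^{(n-1)}(s)f)\, ds$, so that
\[
\mu S_\e(t)f = \mu(f) + \sum_{n \geq 0} \int_0^t \mu\bigl(\hat L_\e S_\e^{(n)}(s)f\bigr)\, ds.
\]
The principal technical step, and the one I expect to be the main obstacle, is the uniform-in-$t$ bound \eqref{ikea}. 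The key point is that $\hat L_\e 1 = 0$, which allows one always to replace the argument by its centered version: since $S(t)$ preserves $\mu$, one has $S(t-s)g - \mu(g) = S(t-s)(g-\mu(g))$ for any $g$. An induction on $n$ that alternates the contraction factor $e^{-\gamma s}$ of $S(s)$ on $L^2_0(\mu)$ with the operator bound $\|\hat L_\e\|$, followed by Fubini to collapse the nested time integrals, yields each term the geometric factor $(\|\hat L_\e\|/\gamma)^{n+1}\|f-\mu(f)\|$. Under the hypothesis $\|\hat L_\e\|<\gamma$ the series in \eqref{muinfinity_bis} is then absolutely convergent, defines a bounded functional $\mu_\e$ on $L^2(\mu)$ whose density with respect to $\mu$ lies in $L^2(\mu)$ (in particular $\mu_\e \ll \mu$), and passing to $t \to \infty$ in $\mu S_\e(t+h) = (\mu S_\e(t))S_\e(h)$ gives invariance. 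This establishes (iii) together with the existence statements of (i).

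For the remaining ergodic assertions in (i), uniqueness of an absolutely continuous invariant measure follows because any such measure must satisfy \eqref{muinfinity_bis} and thus equal $\mu_\e$, while the $L^2$-convergence $S_\e(t)f \to \mu_\e(f)$ for $f \in L^2(\mu)$ can be read off the same expansion via the spectral-gap decay, giving time-ergodicity. Part (ii) -- strict positivity of the density $d\mu_\e/d\mu$ under $r(y,\eta)>0 \Rightarrow r_\e(y,\eta)>0$ -- is the other delicate point: I would argue it by coupling the perturbed and unperturbed dynamics on short time intervals, using that any $\mu$-positive event can be reached through transitions allowed by $r$, which therefore remain allowed by $r_\e$, so that positivity of $\mu$ transfers to $\mu_\e$.

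Finally, for (iv) I would combine ergodicity with the martingale decomposition
\[
X_t^{(\e)} = M_t + \int_0^t j^{(\e)}(\eta_s^{(\e)})\, ds,
\]
where $M_t$ is a martingale with stationary, bounded increments thanks to the finite range of $r_\e$. A standard second-moment estimate gives $M_t/t \to 0$ almost surely, while Birkhoff's ergodic theorem applied to the $\mu_\e$-ergodic process $(\eta_s^{(\e)})_{s \geq 0}$ yields $t^{-1}\int_0^t j^{(\e)}(\eta_s^{(\e)})\, ds \to \mu_\e(j^{(\e)}) = v(\e)$. Substituting $f = j^{(\e)}$ into \eqref{muinfinity_bis} then produces the series representation \eqref{Expvelo}.
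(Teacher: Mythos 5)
This theorem is not proved in the present paper: it is quoted from \cite{mainprobe} (the text points to Theorem 2 and equation (42) there), so there is no in-paper proof to compare against. That said, your plan follows essentially the same route that the paper attributes to \cite{mainprobe}: reversibility of $\mu$ for the unperturbed environment seen from the walker via \eqref{revrate}, the spectral gap giving $\|S(t)f\|\le e^{-\gamma t}\|f\|$ on mean-zero functions, the Duhamel/Dyson--Phillips iteration defining $S_\e^{(n)}$, the centring trick (using that $\hat L_\e$ annihilates constants) to get \eqref{ikea} and hence \eqref{muinfinity_bis}, and a martingale-plus-Birkhoff argument for \eqref{velo}; your induction for \eqref{ikea} is the right computation (the nested time integrals produce $e^{-\gamma s}s^n/n!$, whose integral gives the factor $\gamma^{-(n+1)}$). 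Two points in your sketch are only gestured at and would need genuine work in a complete proof: part (ii), where ``coupling on short time intervals'' is not yet an argument --- what must be shown is that the $L^2(\mu)$ density $d\mu_\e/d\mu$ is $\mu$-a.s.\ strictly positive under the hypothesis $r>0\Rightarrow r_\e>0$; and the convergence statement in (i) for an arbitrary initial law that is merely absolutely continuous w.r.t.\ $\mu$, since your expansion directly handles densities in $L^2(\mu)$ and one still needs a positivity/approximation step to pass to $L^1$ densities, as well as the identification of the limiting functional as a probability measure (positivity and mass one follow because it is a limit of $\mu S_\e(t)$, which you should say explicitly). Finally, note the paper's own caveat after the theorem: one must also assume the environment generator is non-pathological (Prop.~3.1 in \cite{mainprobe}) so that the $L^2$ Dyson--Phillips series really is the Markov semigroup of $(\eta_t^{(\e)})_{t\ge 0}$; your proposal takes this identification for granted.
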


To obtain the above theorem, not all our assumptions are necessary. We refer the interested reader to Theorem  2 in \cite{mainprobe},   {where a more general statement is given, and to (42) in \cite{mainprobe}  which allows to get \eqref{ikea}. Strictly speaking, in the assumptions of   Theorem \ref{chorale} one should include that the environment process has a non pathological generator $\cL_{\rm env}$ (see Prop. 3.1 in \cite{mainprobe} for a precise statement), anyway this additional technical assumption is satisfied in all standard models. Finally,  we point out that a perturbative characterization of the stationary distributions $\mu_\e$ is given in \cite{KO} in a different form}. 


\section{A class of RW with an antisymmetry property}\label{maritozzo}
\subsection{Antisymmetry relation for the velocity}

We can now state   { a first }  new result given by an antisymmetry relation for the velocity of the random walks in dynamic environment introduced above. To this aim, we introduce some additional assumptions.

\begin{assumptionA}\label{brioche}   {The following identities are satisfied:}
\item[(i)]$\sum_y yr(y,\cdot)\equiv 0$
\item[(ii)] $\hat{r}_\e=-\hat{r}_{-\e}$
\item[(iii)]$\hat{r}_\e$ can be factorized into $\hat{r}_\e(y,\eta)=\a(y)\bar{r}_\e(\eta)$ with $\a$ antisymmetric, that is 
$\a(y)=-\a(-y)$.
\end{assumptionA}

As an example one may think of $\bar{r}_\e(\eta)$ as $\e$ times the number of particles in a given neighborhood of the origin. In that case, for $\e>0$, each particle in the environment that falls in the ``vision field'' of the walker favors jumps $X_t^{(\e)}\rightarrow X_t^{(\e)}+ y$ when $\alpha(y)>0$ and discourages them when $\alpha(y)<0$ (and vice-versa for $\e<0$). 
  {Another example (one--dimensional) is given by 
 the $\e$--RW, which indeed  satisfies both Assumptions \ref{lione} and \ref{brioche}}.

\begin{Theorem}\label{guepe}
Assume Assumptions~\ref{piqure}, \ref{lione}, \ref{brioche} and $\|\hat{L}_\e\|<\gamma$, then
\begin{equation}\label{antisym}
v(-\e)=-v(\e).
\end{equation}
\end{Theorem}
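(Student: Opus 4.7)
The plan is to combine the series representation of the velocity from Theorem~\ref{chorale}(iv) with a simple adjoint/time-reversal identity, and reduce \eqref{antisym} to showing that every ``even order'' term of the series vanishes. First I would unpack the algebraic consequences of Assumption~\ref{brioche}: item (i) yields $j^{(\e)}(\eta)=\sum_y y\,\hat r_\e(y,\eta)=A\,\bar r_\e(\eta)$ with $A:=\sum_y y\,\a(y)$, while (ii) and (iii) together give $\bar r_{-\e}=-\bar r_\e$, hence $\hat L_{-\e}=-\hat L_\e$ and $j^{(-\e)}=-j^{(\e)}$. A one-line induction on the recursion \eqref{Sn} then gives $S^{(n)}_{-\e}=(-1)^n S^{(n)}_\e$, so that the $n$-th summand
\[
I_n(\e):=\int_0^\infty\mu\bigl(\hat L_\e\,S^{(n)}_\e(s)\,j^{(\e)}\bigr)\,ds
\]
in \eqref{Expvelo} satisfies $I_n(-\e)=(-1)^n I_n(\e)$. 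Since also $\mu(j^{(-\e)})=-\mu(j^{(\e)})$, the identity $v(-\e)=-v(\e)$ is equivalent to $\sum_{n\geq 0,\,n\text{ even}} I_n(\e)=0$.

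The heart of the argument is then the stronger fact $I_n(\e)=0$ for every even $n\geq 0$. I would factor $\hat L_\e=M_\e\,T$, where $M_\e$ denotes pointwise multiplication by $\bar r_\e$ and $Tf(\eta):=\sum_y\a(y)\,f(\tau_y\eta)$, and record three structural identities in $L^2(\mu)$: (a) $T$ commutes with the unperturbed semigroup $S(t)$, by Assumption~\ref{piqure}(iii); (b) $T^*=-T$, using antisymmetry of $\a$ and translation invariance of $\mu$; (c) $M_\e$ and $S(t)$ are self-adjoint, the latter by reversibility in Assumption~\ref{piqure}(i). Unfolding $S^{(n)}_\e(s)$ via \eqref{Sn} and re-parametrising the nested integrals by the consecutive time gaps $t_0,\dots,t_n\geq 0$, I would write
\[
I_n(\e)=A\int_{[0,\infty)^{n+1}}\mu\bigl(\bar r_\e\,P_n(\vec t)\,\bar r_\e\bigr)\,d\vec t,
\]
where $P_n(\vec t):=T\,S(t_n)M_\e T S(t_{n-1})\cdots M_\e T\,S(t_0)$ is a product containing exactly $n+1$ copies of $T$.

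The key step is the ``skew time-reversal'' identity
\[
P_n(\vec t)^*=(-1)^{n+1}\,P_n(t_n,t_{n-1},\dots,t_0),
\]
valid in $L^2(\mu)$. Taking the adjoint reverses the product and turns each $T$ into $-T$, producing the global sign $(-1)^{n+1}$; property (a) is then used to commute each $T$ past the intervening $S(t_i)$'s so as to re-cast the product in the $P_n$-form, but with the time variables reversed. Integrating this identity over the reversal-symmetric domain $[0,\infty)^{n+1}$ yields $\int\mu(\bar r_\e P_n^*\bar r_\e)\,d\vec t=(-1)^{n+1}I_n(\e)/A$, while symmetry of the bilinear form $(f,g)\mapsto \mu(fg)$ gives $\mu(\bar r_\e P_n^*\bar r_\e)=\mu(\bar r_\e P_n\bar r_\e)$, so the same integral also equals $I_n(\e)/A$. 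For even $n$ this forces $I_n(\e)=-I_n(\e)$, hence $I_n(\e)=0$, as desired.

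The main obstacle is the bookkeeping behind the skew time-reversal identity: neither $M_\e$ nor $T$ commutes with the other, and only the pair $(T,S)$ commutes, so one must check carefully that all non-commutations between $M_\e$ and $T$ cancel once the order of the $t_i$'s is simultaneously reversed, leaving only the global sign produced by the $n+1$ copies of $T^*=-T$. Heuristically, it is the pairing of the skew-adjointness of $T$ with the time-reversal symmetry of the $(n+1)$-dimensional integration simplex (in gap coordinates) that produces the desired vanishing of the even terms.
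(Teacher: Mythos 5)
Your proposal is correct, and it proves exactly the paper's key lemma (vanishing of the even-order terms in the expansion \eqref{Expvelo}; Lemma~\ref{fight} in the paper), but it packages the argument differently. The paper works for concreteness with the $\e$--RW, unfolds $\hat L_\e S_\e^{(n)}(t)j^{(\e)}$ via the explicit representation of Lemma~\ref{nasty} (sums over $\mathbf z\in\{\pm1\}^{n+1}$ and $\d\in\{0,1\}^{n+1}$), kills the $\d\neq\mathbf 1$ terms by a parity argument in $z_j$, and then combines the change of variables $\mathbf t\mapsto\mathbf t^*$ on the simplex with the reversibility identity $\phi_n(\mathbf t,\mathbf 1,\mathbf z)=\phi_n(\mathbf t^*,\mathbf 1,\bar{\mathbf z})$ and the parity of $\prod_i z_i$; finally it uses the explicit linearity in $\e$ of the $\e$--RW rates to read off antisymmetry, asserting that the general case of Assumption~\ref{brioche} is analogous. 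Your operator formulation is the same mechanism in adjoint language: $\sum_y\a(y)=0$ (a consequence of antisymmetry, worth stating explicitly) is what gives the factorization $\hat L_\e=M_\e T$ and replaces the $\d$-cancellation; $T^*=-T$ together with reversal of the time vector replaces the $(\bar{\mathbf z},\mathbf t^*)$ identity; and $[T,S(t)]=0$ (Assumption~\ref{piqure}-(iii)) replaces the translation-invariance step. Note that your worry about $M_\e$--$T$ non-commutation is unfounded: after taking adjoints the product reads $S(t_0)TM_\e S(t_1)TM_\e\cdots S(t_n)T$, and only the commutation of $T$ with the semigroup is needed to restore the canonical form with reversed times, so the skew time-reversal identity checks out directly. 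What your route buys is that it handles general $\hat r_\e$ satisfying Assumption~\ref{brioche} at once (only $\hat r_{-\e}=-\hat r_\e$ is used, via $\hat L_{-\e}=-\hat L_\e$ and $S^{(n)}_{-\e}=(-1)^nS^{(n)}_\e$, no linearity in $\e$), with much lighter bookkeeping. The only point you should add is the justification of the unfolding and of Fubini on $[0,\infty)^{n+1}$: writing each block as $S(t_i)T$ and using that $Tg$ has $\mu$-mean zero, the spectral gap gives an integrable bound $Ce^{-\gamma\sum_i t_i}$, which is exactly the estimate behind \eqref{ikea}, so this is routine but should be said.
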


We prove this theorem in Section~\ref{proofguepe}. For the sake of clarity we restrict ourselves there to the case of the $\e$--RW, but the proof extends easily to the general case.

We point out that $X^{(-\e)}$ is not a time-reversed version of $X^{(\e)}$. Indeed the trajectories of these two processes are quite different in general (see Figure~\ref{twowalkers} for an illustration   {in the case of the $\e$--RW on the East model, which will be explained in detail below}). In particular, Theorem \ref{guepe} does not follow by taking the time reversion of $X^{(\e)}$. This is further discussed in Section~\ref{feller}.

Let us explain  the difficulty behind Theorem \ref{guepe}. We first observe that Assumption \ref{brioche} trivially implies $j^{(\e)}(\eta)=-j^{(-\e)}(\eta)$. This identity alone is not enough to prove the antisymmetry relation \eqref{antisym} since, due to Theorem~\ref{chorale}, $v(\e)=\mu_\e(j^{(\e)})$, while   {$v(-\e)=\mu_{-\e}(j^{(-\e)})$}. One could therefore ask whether the antisymmetry relation could be due to a possible equality of $\mu_\e$ and $\mu_{-\e}$. However, as illustrated for instance in \eqref{limitdensity}   {below}, the two probability distributions $\mu_\e$, $\mu_{-\e}$ do not coincide in general.

\subsection{Velocity and density profile for the $\e$--RW}
  When applying Theorem~\ref{chorale}, we get the following more refined results in the case of the $\e$--RW.

\begin{Proposition}\label{signature}
Under Assumptions~\ref{piqure} and for $2|\e|<\gamma$, the asymptotic velocity $v(\e)$ of the $\e$--RW can be  expressed as
\begin{equation}\label{limitvelocity}
v(\e)=
\begin{cases}
2\e (2\mu(\eta(0))-1) + O(\e^3) & \text{ if } \mu(\eta(0)) \not = 1/2\,\\\e^3 \k + O(\e^5) & \text{ if } \mu(\eta(0)) = 1/2,\,
\end{cases}
\end{equation}
with
\begin{equation}\label{k}
\k:= -8  \mu\left ( (2\eta(0) -1) \Big\{\int_0^\infty\bbE^{(0)}_{\eta}  [\eta_s(1)-\eta_{s}(-1)]ds  \Big\}^2 
\right)\,,
\end{equation}
  {where  the expectation $\bbE^{(0)}_{\eta} $ refers to  the environment viewed from simple random walk,  when starting at $\eta$}. 
Moreover, the even terms in the expansion \eqref{Expvelo} equal zero and the antisymmetry relation $v(\e)=-v(-\e)$ holds.
\end{Proposition}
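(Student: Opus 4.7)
The plan is to apply Theorem~\ref{chorale}-(iv) to $v(\e)$, identify the exact $\e$-power in each Dyson-series term, kill the even-indexed terms via the antisymmetry of Theorem~\ref{guepe}, and then compute the $\e^3$-coefficient explicitly in the balanced case. Specializing \eqref{Expvelo} to the $\e$--RW and setting $f(\eta) := 2\eta(0)-1$, one has $j^{(\e)} = 2\e f$ and, by \eqref{michele}, $\hat{L}_\e = \e\hat{L}$ with $\hat{L} g(\eta) = f(\eta)\bigl[g(\t_1\eta) - g(\t_{-1}\eta)\bigr]$. Iterating \eqref{Sn} shows that $S_\e^{(n)}(s) = \e^n S^{(n)}(s)$ for an $\e$-independent operator $S^{(n)}(s)$, so
$$v(\e) = 2\e\,\mu(f) + 2\sum_{n\geq 0} \e^{n+2} a_n, \qquad a_n := \int_0^\infty \mu\bigl(\hat{L}\, S^{(n)}(s) f\bigr)\, ds,$$
with $\sum_{n\geq 1}\e^{n+2}|a_n| = O(\e^3)$ uniformly in the region $2|\e|<\gamma$ by \eqref{ikea}. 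Since the $\e$--RW satisfies Assumption~\ref{brioche}, Theorem~\ref{guepe} gives $v(-\e) = -v(\e)$, forcing $a_{2k} = 0$ for every $k\geq 0$. This simultaneously proves the first case of \eqref{limitvelocity}, the vanishing of even-indexed terms, and the antisymmetry statement.

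For the balanced case $\mu(\eta(0))=1/2$ one has $\mu(f) = 0$, whence $v(\e) = 2\e^3 a_1 + O(\e^5)$ and it remains to identify $2a_1$ with $\k$. A one-line translation-invariance computation yields the adjoint identity $\mu(\hat{L} g) = -2\mu(hg)$ with $h(\eta) := \eta(1)-\eta(-1)$, and in particular $\mu(h)=0=\mu(f)$. The spectral gap thus makes
$$F := \int_0^\infty S(u) f\,du,\qquad G := \int_0^\infty S(u) h\,du$$
well-defined, $\mu$-centered elements of $L^2(\mu)$ solving the Poisson equations $\cL_{\rm ew}^{(0)} F = -f$ and $\cL_{\rm ew}^{(0)} G = -h$. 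Note also that $\mu$ is reversible for $S(t)$, thanks to \eqref{revrate} combined with translation invariance of $\mu$.

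The crucial identity, which I expect to be the main obstacle, is $\hat{L} F = 2\, fG$. Because the unperturbed rates $r(\pm 1,\cdot)\equiv 1/2$ do not depend on the environment, $\cL_{\rm ew}^{(0)}$ commutes with the spatial shifts $\t_k$; applying this to $F\circ\t_1 - F\circ\t_{-1}$ together with $f\circ\t_1 - f\circ\t_{-1} = 2h$ and uniqueness of the Poisson solution modulo constants yields $F\circ\t_1 - F\circ\t_{-1} = 2G$, hence $\hat{L} F = f\,(F\circ\t_1 - F\circ\t_{-1}) = 2fG$. This step is really where the concrete form of the $\e$--RW enters: it would not extend verbatim to the general Assumption~\ref{brioche} setting, where the unperturbed rates depend on the environment. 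To finish, rewrite
$$a_1 = \int_0^\infty\!\!\int_0^\infty \mu\bigl(\hat{L}\, S(t)\,\hat{L}\, S(u) f\bigr)\,dt\,du;$$
the adjoint identity combined with self-adjointness of $S(t)$ in $L^2(\mu)$ gives $\int_0^\infty \mu(\hat{L}\, S(t)\phi)\,dt = -2\mu(G\,\phi)$ for any $\phi\in L^2(\mu)$. Taking $\phi = \hat{L}\, S(u) f$, followed by Fubini and the continuity of $\hat{L}$, produces $a_1 = -2\mu(G\,\hat{L} F) = -4\mu(fG^2)$, and therefore $2a_1 = -8\mu(fG^2) = \k$, which establishes the second case of \eqref{limitvelocity}.
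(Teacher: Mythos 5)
Your proposal is correct, and it is worth noting where it tracks the paper and where it deviates. For the vanishing of the even terms, the paper works term by term: its Lemma~\ref{fight} shows that $\mu\bigl(\hat L_\e S_\e^{(2n)}(t)j^{(\e)}\bigr)=0$ for every $t$, via reversibility, a time-reversal change of variables and a sign count, and Proposition~\ref{signature} then quotes this together with \eqref{anti}. You instead take Theorem~\ref{guepe} as a black box (legitimate and non-circular, since the paper proves it before and independently of Proposition~\ref{signature}, and the $\e$--RW satisfies Assumptions~\ref{lione} and \ref{brioche}), combine it with the exact $\e$-homogeneity $\hat L_\e=\e\hat L$, $S_\e^{(n)}=\e^n S^{(n)}$, $j^{(\e)}=2\e f$, and the uniform bound \eqref{ikea}, and conclude $a_{2k}=0$ by the analyticity of the convergent power series $\sum_{n}\e^{n+2}a_n$ on $\{2|\e|<\gamma\}$; this is a genuine shortcut that trades the combinatorial lemma for an abstract identity-of-coefficients argument (your phrase ``uniformly in the region $2|\e|<\gamma$'' is slightly off near the boundary, but only small $\e$ matters). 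For the $\e^3$ coefficient, your computation is algebraically the same as the paper's: the paper manipulates the double time integral directly, using the form \eqref{michele} of $\hat L_\e$, commutation of $S(t)$ with shifts (Assumption~\ref{piqure}--(iii) plus environment-independence of the unperturbed rates), self-adjointness of $S(t)$ and a change of variables, arriving at $-\e^2\mu\bigl(f\,[\int_0^\infty S(s)h\,ds]^2\bigr)$; you reorganize exactly these ingredients in Poisson-equation/resolvent language via $F,G$ and the identity $\hat L F=2fG$. One cosmetic remark: your commutation step needs Assumption~\ref{piqure}--(iii) (for the environment part of $\cL_{\rm ew}^{(0)}$) in addition to the environment-independence of the unperturbed rates, and the identity $F\circ\tau_1-F\circ\tau_{-1}=2G$ can be read off directly from $F\circ\tau_{\pm1}=\int_0^\infty S(u)(f\circ\tau_{\pm1})\,du$ without invoking uniqueness of the Poisson solution; your factor bookkeeping ($2a_1=-8\mu(fG^2)=\k$) matches \eqref{k}.
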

When $\mu(\eta(0)) \not = 1/2$, for $\e$ small enough, the sign of the velocity can be read from \eqref{limitvelocity}.
When $\mu(\eta(0))= 1/2$, the scenario is more subtle.   {Since $\nu_{1/2}$ is left invariant by particle--hole exchange and  due to the form  } of transitions of the   {$\e$--RW},
 one may naively guess that the velocity is zero.
Despite this guess, the answer seems strongly dependent on the specific dynamics of the underlying environment.
In   {Section \ref{sec:east} }   we investigate more precisely the case of the East model and we give arguments supporting the negativity of $v(\e)$ for $\e>0$   {at density $1/2$}. Let us conclude this section by observing that in simple settings it is easy to deduce that $v(\e)\equiv 0$ at density $1/2$.

\begin{remark}[{\bf   {Zero} velocity for independent spin-flip dynamics}]
Suppose that the dynamics of the environment is given by independent spin-flips   { with generator} 
$\mathcal{L}_{\rm env} f(\eta)=\gamma\sum_{x\in \bbZ} [f(\eta^x)-f(\eta)]$,   {where $\eta^x$ is the configuration obtained from $\eta$ by a spin flip at $x$}.   {Then}  the product Bernoulli measure with density $1/2$ is ergodic and reversible for this dynamics. Since for this initial distribution the process is invariant by inversion of particles and holes, it is easy to see that $v(\e)=v(-\e)$. Consequently, by the antisymmetry relation \eqref{antisym}, we conclude that $v(\e)\equiv 0$.
\end{remark}
Our next result provides some description of the density profile of the   {environment}  observed by the $\e$-RW.
\begin{Proposition}\label{denso}
Under Assumptions~\ref{piqure} and for $2|\e|<\gamma$, the stationary distribution  $\mu_\e$ of the   {environment   seen from the $\e$--RW  (i.e. of the process  $(\eta^{(\e)}_t)_{t\geq 0})$} admits the following representation for any function $f\in L^2(\mu )$:
\begin{equation}\label{boh}
\mu_\e(f)=\mu (f)+2\e\int_0^\infty ds\sum_{y\in \mathbb Z}p_t(y)\mu\big(\xi(0)\bbE^{\rm env}_\xi\left[f(\tau_{y+1}\xi_s)-f(\tau_{y-1}\xi_s)\right]\big)+O(\e^2),
\end{equation}
where $\bbE^{\rm env}_{\xi}$ denotes the expectation w.r.t.\@ the environment with generator $\mathcal{L}_{\rm env}$  starting from $\xi\in \O$, and $p_t(y-x)$ is the probability that a simple symmetric random walk jumping at rate $1$, started at $x$, is in position $y$ at time $t$.
\end{Proposition}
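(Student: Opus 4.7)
\medskip
\noindent\textbf{Proof plan for Proposition \ref{denso}.}
The plan is to apply Theorem~\ref{chorale}(iii) to the $\e$--RW and read the expansion \eqref{muinfinity_bis} up to the first non--trivial order. Under the assumption $2|\e|<\gamma$, the bound \eqref{ikea} combined with $\|\hat L_\e\|\leq 2|\e|$ (which is immediate from \eqref{michele}) yields
\[
\Bigl|\int_0^\infty \mu\!\left(\hat L_\e S_\e^{(n)}(s)f\right) ds\Bigr|\leq (2|\e|/\gamma)^{n+1}\|f-\mu(f)\|,
\]
so that $\sum_{n\geq 1}$ contributes $O(\e^2)$. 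Hence it only remains to compute the $n=0$ term $\int_0^\infty \mu(\hat L_\e S(s)f)\,ds$ explicitly.

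The key observation is that the unperturbed walker is a continuous--time simple symmetric random walk $(X_s)_{s\geq 0}$ jumping at total rate one, and its dynamics is \emph{independent} of the environment because $r(\pm 1,\eta)\equiv 1/2$ does not depend on $\eta$. Consequently $\eta_s=\tau_{X_s}\xi_s$ with $X$ and $\xi$ independent, and
\[
S(s)f(\eta)\;=\;\sum_{y\in\bbZ} p_s(y)\,\bbE^{\rm env}_\eta\!\left[f(\tau_y\xi_s)\right].
\]
Inserting this and the explicit form \eqref{michele} of $\hat L_\e$ gives
\[
\int_0^\infty\!\! \mu(\hat L_\e S(s)f)\,ds
=\e\int_0^\infty\!\! ds\sum_{y}p_s(y)\,\mu\!\left((2\eta(0)-1)\bigl\{\bbE^{\rm env}_{\tau_1\eta}[f(\tau_y\xi_s)]-\bbE^{\rm env}_{\tau_{-1}\eta}[f(\tau_y\xi_s)]\bigr\}\right).
\]
Assumption~\ref{piqure}(iii) (commutation of the environment semigroup with translations) together with the translation invariance of $\mu$ allows to transfer the outer shifts $\tau_{\pm 1}$ inside the environment expectation, producing $\bbE^{\rm env}_\eta[f(\tau_{y\pm 1}\xi_s)]$ and keeping the weight $(2\eta(0)-1)$ unchanged.

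Finally, stationarity of $\mu$ for the environment plus its translation invariance imply that $\mu\bigl(\bbE^{\rm env}_\eta[f(\tau_{y+1}\xi_s)-f(\tau_{y-1}\xi_s)]\bigr)=\mu(f)-\mu(f)=0$, so the constant $-1$ in $(2\eta(0)-1)$ drops out and the surviving term is exactly $2\e\int_0^\infty ds\sum_y p_s(y)\,\mu\bigl(\eta(0)\bbE^{\rm env}_\eta[f(\tau_{y+1}\xi_s)-f(\tau_{y-1}\xi_s)]\bigr)$, which matches \eqref{boh}. The steps are all routine given Theorem~\ref{chorale}; the only point to watch is the transfer of translations, which truly requires the commutation property of Assumption~\ref{piqure}(iii).
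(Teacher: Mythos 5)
Your proposal is correct and follows essentially the same route as the paper: truncate the expansion of Theorem~\ref{chorale}(iii) at $n=0$ using \eqref{ikea} with $\|\hat L_\e\|\leq 2|\e|$, write $S(s)f(\eta)=\sum_y p_s(y)\,\bbE^{\rm env}_\eta[f(\tau_y\xi_s)]$ thanks to the independence of the unperturbed walk from the environment, move the shifts $\tau_{\pm1}$ through the environment semigroup via Assumption~\ref{piqure}(iii), and kill the constant part of $2\eta(0)-1$ by stationarity and translation invariance of $\mu$. This is exactly the paper's argument, so nothing further is needed.
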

 
\section{The $\e$-RW on the East model}\label{sec:east}
\begin{definition}\label{EastDef}{\bf (East dynamics)}
For $x\in\bbZ$ and $\xi\in\Omega$, set $c^{\rm east}_x(\xi):= 1- \xi (x+1) $. 
The East model is the Markov process on $\{0,1\}^\bbZ$ with infinitesimal generator 
\begin{equation}\label{EastGen}
\mathcal{L}_{\rm east} f(\xi):=\sum_{x\in\mathbb{Z}}c_x ^{\rm east} (\xi)\left[\rho(1-\xi(x))+(1-\rho)\xi(x)\right]\left[f(\xi^x)-f(\xi)\right],
\end{equation}
with $\rho\in(0,1)$ being a fixed parameter, and 
$\xi^x$ the configuration obtained by flipping the coordinate of $\xi$ at site $x$. 
\end{definition}

The East model can be described as follows: at each site $x$, after an exponential time of parameter $1$ and provided that the kinetic constraint $ c_x ^{\rm east} =1$ is satisfied\footnote{The East model belongs to the class of kinetically constrained spin models \cite{CMRT}.}, the particle configuration $\xi(x)$ is refreshed and set equal to $1$ with probability $\rho$ and equal to $0$ with probability $1- \rho$.
It is simple to check that the Bernoulli product measure with density $\rho$, denoted by $\nu_{\rho}$, is a reversible probability  measure. 

\begin{remark}[{\bf  West and FA-1f models}]\label{WestFA}
We notice that by definition of $c_x ^{\rm east}$, in order to change the state at site $x$, the site to its ``East", i.e. at position $x+1$, has to be vacant.
This justifies the name of this model. The West model is the process with  generator as in \eqref{EastGen} when we replace $c^{\rm east}_x(\xi)$ by $c^{\rm west}_x(\xi):= 1- \xi (x-1)$, which means that the constraint has to be satisfied in the other direction.
A symmetrized version of the East and West models is the so-called FA1f (Fredrickson-Andersen one spin facilitated) model, that is the process with generator as in \eqref{EastGen} with $c^{\rm fa}_x(\xi):=1- \xi (x-1)\xi (x+1)$. In particular, for any $\rho\in(0,1)$, the Bernoulli product measure $\nu_{\rho}$ is again reversible for the West and the FA1f models. Figure \ref{EastFA} shows space-time realizations of the East and the FA1f particle systems. The West looks like the East reflected w.r.t. the time axis.
\end{remark}

All the models introduced above satisfy Assumption~\ref{piqure} for $\rho\in (0,1)$ (see \cite{AD,CMRT} for the positivity of the spectral gap), and we can therefore apply the results of \cite{mainprobe} to them. However, stronger inequalities of Sobolev type do not hold for the East model \cite{eastrecent} and we can hope for no uniform mixing property due to the hardness of the constraint in Definition~\ref{EastDef}. It is also non-attractive, as one can check easily by noticing that more empty sites allow to create more holes, but also to add more particles.

In Figure~\ref{EastFA} we present a simulation of the East/FA1f dynamics. One can observe bubbles of occupied sites forming. These are space-time regions with zero activity and a fundamental feature of kinetically constrained dynamics. Rigorous attempts towards an understanding of their structure can be found in \cite{BT,BLT, O2, GLM}.

\begin{figure}
\begin{center}
\includegraphics[scale=.5]{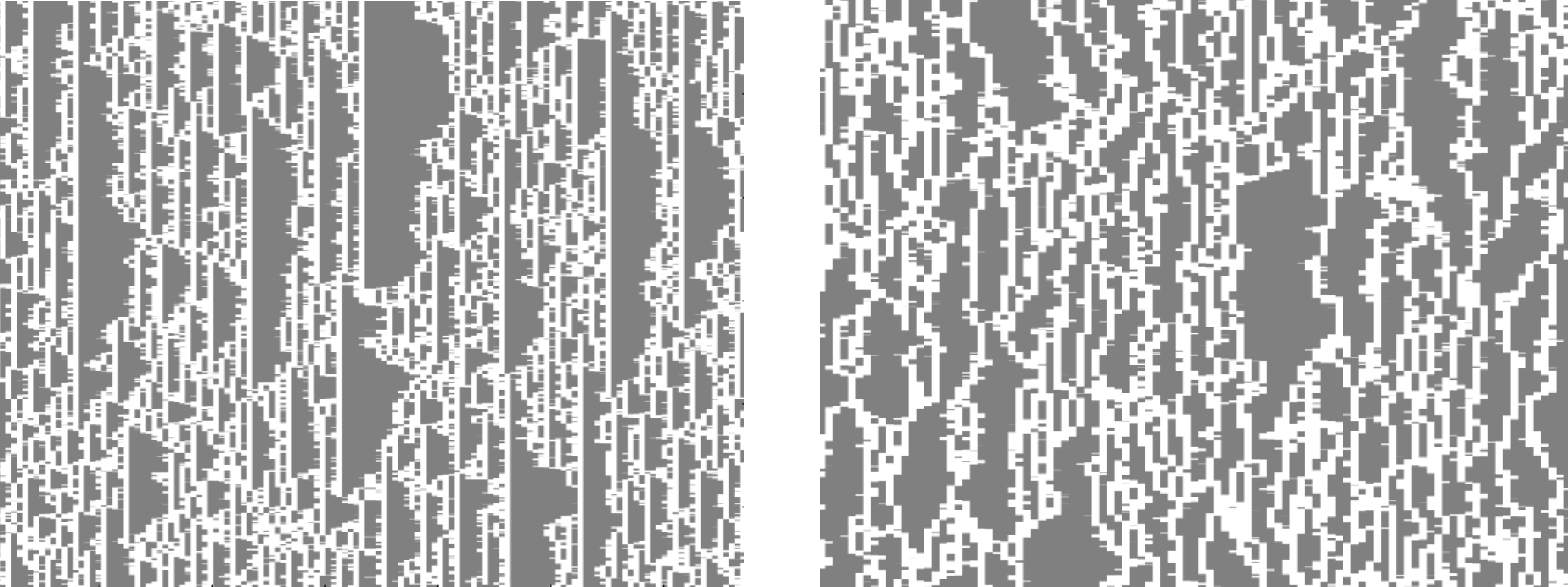}
\caption{Two space-time (horizontal-vertical axis) realizations of the East and the FA1f models, left and right pictures, respectively. Particles ($1's$) in gray.}
\label{EastFA}
\end{center}
\end{figure} 
\subsection{Asymptotic velocity}\label{feller}
 
Simulations suggest that the $\e$-RW is ballistic for $\rho=1/2$, drifting to the left when $\e>0$ (see Figure \ref{pSpeed}). 
This motivates the following conjecture: 
\begin{conjecture}\label{conj}
When the environment is the East model at density $1/2$, for $\e>0$ (resp. $\e<0$) we have $v(\e)<0$ (resp. $v(\e)>0$).
\end{conjecture}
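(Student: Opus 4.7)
By Theorem \ref{guepe} (whose hypotheses are readily verified for the $\e$--RW, with $\alpha(\pm 1)=\pm 1$ and $\bar r_\e(\eta)=\e(2\eta(0)-1)$), the antisymmetry $v(-\e)=-v(\e)$ reduces the conjecture to showing $v(\e)<0$ for $\e>0$. My strategy is to attack this first perturbatively, and then to discuss what is needed to cross to the full range. In the perturbative regime, Proposition \ref{signature} at density $1/2$ gives $v(\e)=\e^3\k+O(\e^5)$, so the small--$\e$ case reduces to
\[\mathcal{A}:=\nu_{1/2}\!\left((2\eta(0)-1)\,F(\eta)^2\right)>0,\qquad F(\eta):=\int_0^\infty \bbE^{(0)}_\eta\bigl[\eta_s(1)-\eta_s(-1)\bigr]\,ds.\]

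Before extracting the sign, I would unpack $F$. Since under the SRW--viewed environment one has $\eta_s=\tau_{X_s}\xi_s$, where $\xi_s$ is the East process started from $\eta$ and $X_s$ is an independent simple random walk with transition kernel $p_s$,
\[F(\eta)=\int_0^\infty ds \sum_y \bigl[p_s(y-1)-p_s(y+1)\bigr]\,\bbE^{\rm east}_\eta\bigl[\xi_s(y)-\tfrac12\bigr].\]
Thus $F$ is the convolution of an antisymmetric spatial kernel with East one--point functions, and it vanishes when $\eta$ is drawn from $\nu_{1/2}$. Conditioning on $\eta(0)$ then gives
\[\mathcal{A}=\tfrac12\bigl[\nu_{1/2}(F^2\mid\eta(0)=1)-\nu_{1/2}(F^2\mid\eta(0)=0)\bigr],\]
so the task reduces to proving that forcing a particle at the origin yields a strictly larger conditional second moment of $F$ than forcing a hole.

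The main obstacle is precisely this last inequality, since the East model has no closed form for its space--time correlations at density $1/2$. I see two promising routes. (a) Truncate the East dynamics to a finite window, combine explicit diagonalization of the resulting finite generator with a monotonicity/continuity argument in the window size, and transfer the sign of $\k$ from truncated to infinite-volume via the spectral gap. (b) Construct a graphical coupling of East started from $\eta$ and from $\eta^0$: disagreement at the origin can only be resolved once site $1$ becomes empty, so the sign of $\eta(0)$ controls an oriented waiting time for the two copies to merge, which should propagate to a quantitative gap between the two conditional variances of $F$. I would view Propositions \ref{shaun} and \ref{DegDrift} as concrete instantiations of this program. Extending the argument beyond small $\e$ seems genuinely harder: the full series \eqref{Expvelo} has no a priori definite sign, and covering the whole range $\e\in(0,\g/2)$ likely requires a trajectory--level coupling of the $\e$--RW tailored to the kinetic constraint, since the East model enjoys neither attractiveness nor uniform mixing. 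Such an argument would be a substantial extension well beyond the framework of \cite{mainprobe}.
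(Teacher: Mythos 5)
First, a point of status: the statement you are addressing is an open conjecture in the paper, not a theorem. The paper itself offers no proof, only supporting evidence (numerics, the criterion of Proposition \ref{shaun}, and the degenerate walk of Proposition \ref{DegDrift}), and your proposal does not close the gap either. The parts of your plan that are actually carried out are correct but reproduce material already in the paper: the reduction by antisymmetry (Theorem \ref{guepe}), the reduction of the small-$\e$ case to the sign of $\k$ in \eqref{k} via Proposition \ref{signature}, the rewriting of $F$ through the independent SRW kernel, and the reformulation of $\k<0$ as a comparison of the conditional second moments of $F$ given $\eta(0)=1$ versus $\eta(0)=0$ (this last step is essentially the content behind the paper's criterion \eqref{criterion}, which after expanding $F^2$ and cancelling terms via the orientation lemma reduces precisely to a signed space--time three-point correlation of the East process). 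The genuinely hard step --- proving that forcing a particle at the origin strictly increases this conditional second moment, equivalently proving an inequality of the type \eqref{criterion} --- is exactly where your argument stops: route (a) is stated without any mechanism for why the truncated sign would be definite or would survive the limit, and route (b) ends with ``should propagate to a quantitative gap,'' which is the heuristic the paper already gives below \eqref{criterion}, not a proof. Note also that the coupling in (b) must control a weighted space--time integral of occupation variables at $\pm1$ seen from an independent walker, not merely a merging time of the two East copies, so the step from ``oriented waiting time'' to the variance gap is substantial.

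Two smaller points. Proposition \ref{DegDrift} is not an instantiation of your coupling program: it concerns a degenerate (infinite-bias) walk and is proved by comparison with the front process of \cite{O2}; it gives heuristic support only. And your claim that the non-perturbative range $\e\in(0,\g/2)$ ``likely requires a trajectory-level coupling'' is a reasonable guess but again no argument; since the series \eqref{Expvelo} has no definite sign term by term, nothing in your plan (or in the paper) addresses it. In short: your reductions are sound and parallel the paper's own partial analysis, but the conjecture remains unproven under your proposal, with the missing ingredient being precisely the unproven correlation inequality \eqref{criterion} (or an equivalent monotonicity statement for the East process).
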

Below we will give two theoretical arguments supporting the above conjecture, based on Propositions~\ref{shaun} and \ref{DegDrift}.
 
\begin{figure}[hbtp]
\vspace{0.5cm}
\begin{center}
\includegraphics[width=4cm]{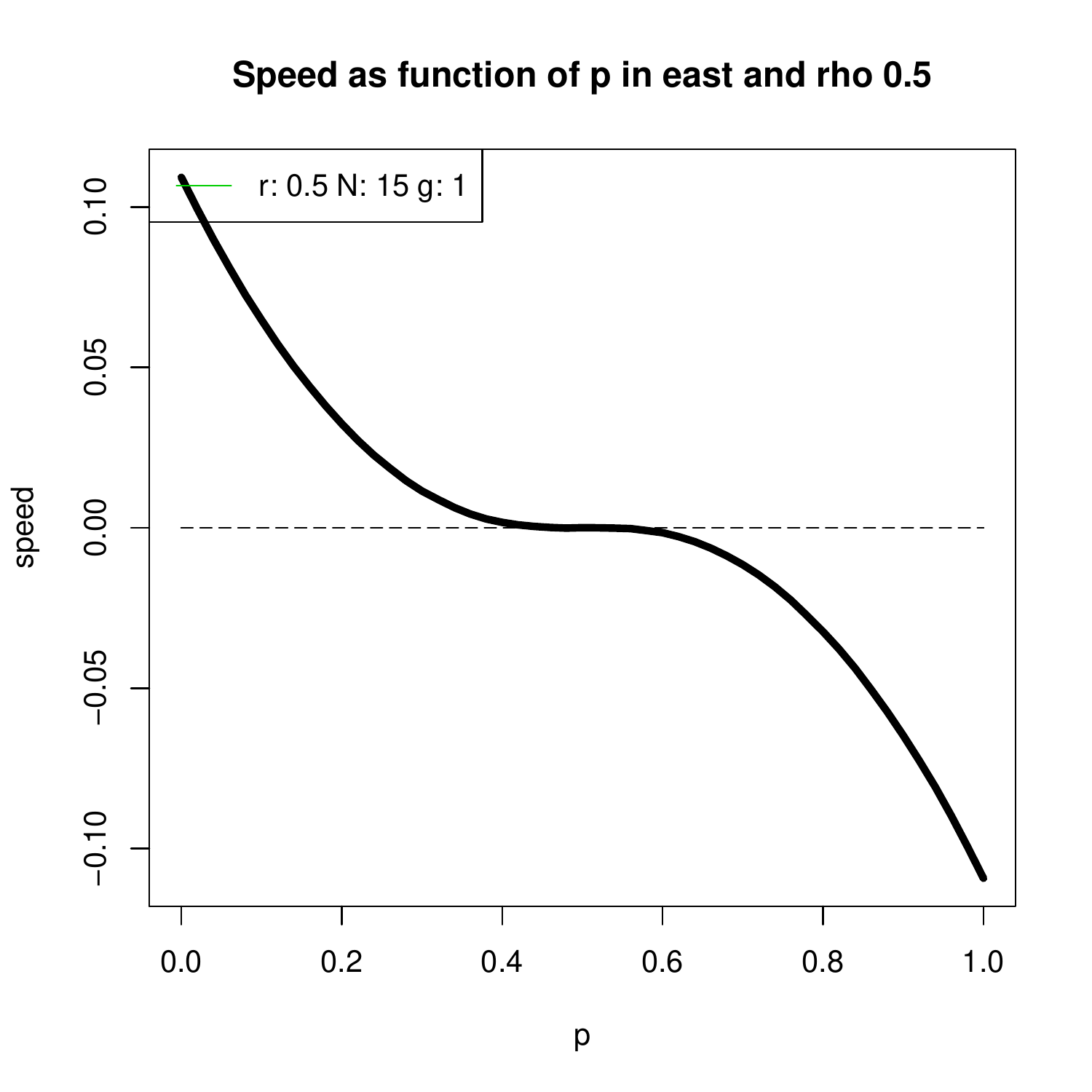}
\includegraphics[width=4cm]{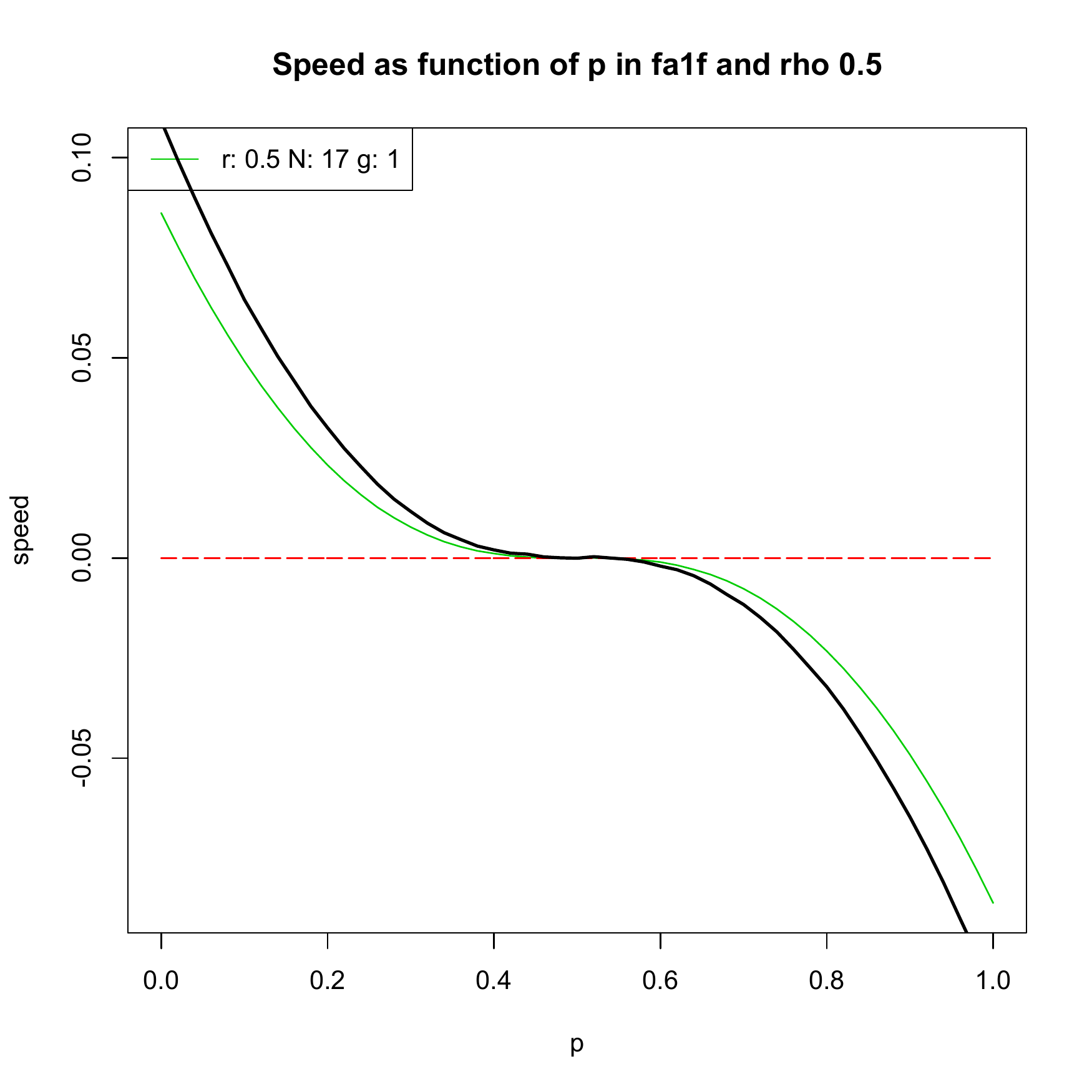} 
\end{center}
\caption{\small  Velocity as a function of $p:=1/2+\e\in [0,1]$ of the $\e$-RW in East (left picture, black curve) and FA1f (right picture, green curve) environments at density $\rho=1/2$. Note the antisymmetry $v(\e)=-v(-\e)$ as in Theorem \ref{guepe} and the non-zero velocity. The curves have been obtained by interpolation over points at distance $0.02$ in $[0,1]$, for each point, the corresponding value of the velocity is given by a sample-mean on $\approx 5000$ experiments in which the RW performed $2^{17}$ jumps. 
}
\label{pSpeed}
\end{figure}

It is tempting to interpret the sign of $v(\e)$ as a signature of the orientation of the East model and the asymmetry of its dynamics. However, as pointed out in Proposition~\ref{signature}, the antisymmetry relation \eqref{antisym} holds for the $\e$--RW on the East model. Recall the West model mentioned in Remark \ref{WestFA}, and denote by $P^{\rm east}_{\eta,0 }$ and $P^{\rm west}_{\eta,0 }$, the laws of the $\e$-RW in the environments East and West, respectively,   {starting at the origin with environment $\eta$}. Then, by    {considering a space reflection at the origin}, it is easy to see that $\forall A\subset \bbR$ and any $\eta\in\Omega$, $P^{\rm east}_{\eta,0 }(X^{(\e)}\in A)=P^{\rm west}_{\eta,0 }(-X^{(-\e)}\in A)$. Consequently, at any density, $v_{\rm east}(\e)=-v_{\rm west}(-\e)$, where $v_{\rm east}(\e)$ denotes the asympotic velocity in \eqref{velo} in the East environment, and similarly for West. In view of this observation, the following statement is a straightforward consequence of \eqref{antisym} and shows that the orientation of the environment does not determine the sign of the velocity.

\begin{Corollary}\label{EW}At any density $\rho\in (0,1)$
$$v_{\rm east}(\e)=v_{\rm west}(\e).$$
\end{Corollary}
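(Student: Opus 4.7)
The plan is to combine two ingredients already made available in the paper: the space-reflection identity $v_{\rm east}(\epsilon)=-v_{\rm west}(-\epsilon)$ derived just above the statement of the corollary, and the antisymmetry relation $v(-\epsilon)=-v(\epsilon)$ of Theorem~\ref{guepe} applied to the $\epsilon$-RW in the West environment.

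More precisely, I will first observe that the $\epsilon$-RW on the West model satisfies the hypotheses of Theorem~\ref{guepe}. Assumption~\ref{piqure} for the West model holds by Remark~\ref{WestFA} and the discussion right after (positivity of the spectral gap, reversibility of $\nu_\rho$, translation invariance of the semigroup). Assumptions~\ref{lione} and~\ref{brioche} for the $\epsilon$-RW are intrinsic to the rates defined in \eqref{pRate}, independently of which environment is chosen (the unperturbed part is the SSRW with zero drift, $\hat{r}_\epsilon = -\hat{r}_{-\epsilon}$, and $\hat{r}_\epsilon(y,\eta) = \alpha(y)\bar{r}_\epsilon(\eta)$ with $\alpha(\pm 1)=\pm 1$ antisymmetric). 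Moreover $\|\hat L_\epsilon\|\leq 2\epsilon$ as noted in \eqref{michele}, so the smallness condition $\|\hat L_\epsilon\|<\gamma$ holds for $\epsilon$ small enough relative to the spectral gap of the West model.

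Having verified this, Theorem~\ref{guepe} applied to the $\epsilon$-RW in the West environment yields
\begin{equation*}
v_{\rm west}(-\epsilon) = -v_{\rm west}(\epsilon).
\end{equation*}
Combining with the space-reflection identity $v_{\rm east}(\epsilon) = -v_{\rm west}(-\epsilon)$ recalled just above the corollary gives $v_{\rm east}(\epsilon) = -(-v_{\rm west}(\epsilon)) = v_{\rm west}(\epsilon)$, as claimed.

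There is essentially no obstacle here: the result is a two-line consequence of the two previously established facts. The only mild subtlety worth flagging is that Theorem~\ref{guepe} is stated for the $\epsilon$-RW on a generic environment satisfying Assumption~\ref{piqure}, and one should make sure that the antisymmetry is applied in the West setting rather than re-derived from scratch; but since the proof of Theorem~\ref{guepe} never used any property specific to the East model, this transfer is immediate.
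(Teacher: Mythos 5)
Your proof is correct and follows essentially the same route as the paper: the corollary is obtained there exactly by combining the space-reflection identity $v_{\rm east}(\e)=-v_{\rm west}(-\e)$ with the antisymmetry relation \eqref{antisym} of Theorem~\ref{guepe}. The only cosmetic difference is that you invoke the antisymmetry for the West environment (after checking Assumption~\ref{piqure} for it), while the paper phrases it via the East model; the two variants are interchangeable.
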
 

\begin{figure}
\begin{center}
\includegraphics[scale=.3]{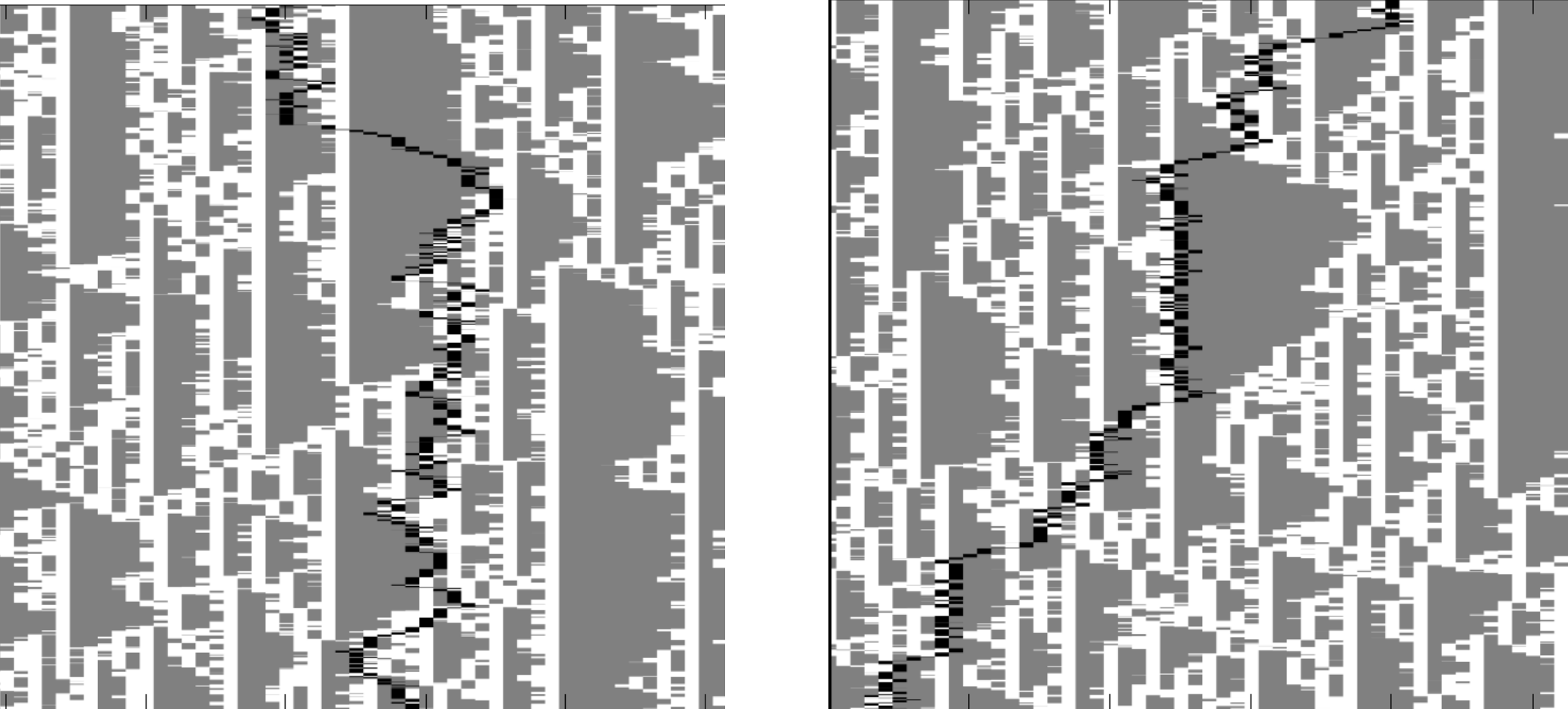}
\caption{Two simulations of $\e$-RW (in black) on the East model with $\rho=0.6$. Time goes down, space is horizontal; particles are in gray, holes in white. On the left, $\e=+0.3$. On the right, $\e=-0.3$. The two walkers have different-looking trajectories, even up to mirror reflection, due to the fact that one of them sticks to the fluctuating border of the bubbles and the other to the straight border.}
\label{twowalkers}
\end{center}
\end{figure}

Above we use the notation $v_{\rm east}(\e)$ and $v_{\rm west}(\e)$ to distinguish the velocities in two environments. From now on we consider only the East model and go back to the lighter notation $v(\e)$.

Besides the numerics in Figure~\ref{pSpeed}, we now show two different results supporting Conjecture~\ref{conj}. 
The following proposition provides a criterion in terms of space--time correlations of the environment implying the negativity of $\k$ defined in Proposition~\ref{signature} (recall that $v(\e)=\k \e^3+O(\e^5)$).

\begin{Proposition}\label{shaun}
Set $\rho=1/2$. If for all $s,t> 0$ and for all $y\geq 1$ it holds
\begin{equation}\label{criterion}
\bbE_{\nu_{1/2}}^{\mathrm{east}}\left[\xi_0(0)\left(2\xi_t(y)-1\right)\xi_{t+s}(0)\right]>0,
\end{equation}
then $\k$ in \eqref{k} is negative.
\end{Proposition}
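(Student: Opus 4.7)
\emph{Plan.} Since $\kappa=-8\,\mu\bigl((2\eta(0)-1)G(\eta)^2\bigr)$ with $G(\eta):=\int_0^\infty h_u(\eta)\,du$ and $h_u(\eta):=\bbE^{(0)}_\eta[\eta^{(0)}_u(1)-\eta^{(0)}_u(-1)]$, the claim $\kappa<0$ is equivalent to $\mu\bigl((2\eta(0)-1)G^2\bigr)>0$. First, I would exploit that at $\epsilon=0$ the walker is a rate-$1$ simple random walk $X$ that is \emph{independent} of the East dynamics $\xi$, so that
\[
h_u(\eta)=\sum_{z\in\bbZ} p_u(z)\,\bbE^{\mathrm{env}}_\eta\bigl[\xi_u(z+1)-\xi_u(z-1)\bigr],
\]
where $p_u$ is the SRW transition kernel. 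Writing $G^2=\int_0^\infty\!\int_0^\infty h_s h_t\,ds\,dt$ and expanding, the integrand becomes the product of two conditional expectations over the East dynamics started from the common configuration $\eta\sim\mu$; these two copies are independent given $\eta$.

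Next, I would apply the classical ``past/future'' trick: using reversibility of $\mu=\nu_{1/2}$ under the East dynamics, the two independent copies of $\xi$ starting from $\eta\sim\mu$ can be realized jointly as a single stationary East process $(\xi_u)_{u\in\bbR}$ with $\xi_0=\eta$, one copy playing the forward time $u\ge 0$ and the other the reversed time $u\le 0$. Taking $\mu$-expectation and shifting time by $t$ (stationarity), this yields
\[
\mu\bigl((2\eta(0)-1)h_s h_t\bigr)=\sum_{a,b\in\bbZ} p_s(a)p_t(b)\,\bbE\bigl[\sigma_t(0)\bigl(\xi_{s+t}(a+1)-\xi_{s+t}(a-1)\bigr)\bigl(\xi_0(b+1)-\xi_0(b-1)\bigr)\bigr],
\]
where $\sigma_t(0):=2\xi_t(0)-1$ and the expectation is under the stationary East process.

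Summation by parts turns each factor $\sum_a p_s(a)(\xi(a+1)-\xi(a-1))$ into $\sum_a q_s(a)\xi(a)$ with $q_s(a):=p_s(a-1)-p_s(a+1)$; crucially, $q_s$ is antisymmetric and strictly positive on $\{a\ge 1\}$ since $|a-1|<|a+1|$ there. Using this antisymmetry on both sums, one arrives at
\[
\mu\bigl((2\eta(0)-1)G^2\bigr)=\int_0^\infty\!\!\int_0^\infty\!\! ds\,dt\sum_{a,b\ge 1}q_s(a)q_t(b)\,\bbE\bigl[(\xi_0(b)-\xi_0(-b))\sigma_t(0)(\xi_{s+t}(a)-\xi_{s+t}(-a))\bigr],
\]
and since $q_s(a)q_t(b)>0$ for $a,b\ge 1$, it suffices to show that the three-time correlation in brackets is positive for every $a,b\ge 1$ and $s,t>0$.

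\emph{Reduction to the hypothesis and main obstacle.} Expanding the two differences yields four triple-site correlations of the form $\bbE[\xi_0(\pm b)\sigma_t(0)\xi_{s+t}(\pm a)]$. Translating each so that one of the outer $\xi$'s sits at the origin and grouping by the residual $\sigma_t$-position brings these four terms into combinations of $\bbE[\xi_0(\cdot)\sigma_t(\pm a)\xi_{s+t}(0)]$. The hypothesis \eqref{criterion} is precisely the positivity of $\bbE[\xi_0(0)\sigma_t(y)\xi_{s+t}(0)]$ for $y\ge 1$, and I would use the space-symmetry of the two-point function $\bbE[\xi_0(0)\xi_r(z)]=\bbE[\xi_0(0)\xi_r(-z)]$ (valid under $\nu_{1/2}$ by reversibility plus translation invariance) to clean up the lower-order contributions. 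The hard step, and the main obstacle, is the last one: matching the translated triple correlations arising from the four expanded terms to the specific form controlled by \eqref{criterion}. The hypothesis only directly controls configurations where \emph{both} outer $\xi$'s sit at $0$ and $\sigma_t$ is at a positive site, whereas the expansion typically produces configurations with outer $\xi$'s at distinct sites and with $\sigma_t$ at either sign. Carrying out the bookkeeping — which positions, after the admissible translations, fall into the hypothesized regime, and showing that the remaining pieces either reorganize into hypothesis-controlled terms or reduce via the two-point symmetry — is the decisive combinatorial/algebraic step.
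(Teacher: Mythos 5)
Your first half is essentially the paper's: writing $\kappa$ as a double time integral of three--time East correlations weighted by simple--random--walk kernels is exactly what the paper does (it gets there via the identity $\kappa=-2\int_0^\infty\!\!\int_0^\infty \nu_\rho\bigl(h\cdot S(t)(fS(s)h)\bigr)\,ds\,dt$, reversibility and the Markov property, rather than via your two--sided stationary ``past/future'' construction, but the resulting expression is the same), and your summation by parts producing the antisymmetric weights $q_s(a)=p_s(a-1)-p_s(a+1)$ is consistent with the weights $\bigl(p_t(y+1)-p_t(y-1)\bigr)\bigl(p_s(y+1)-p_s(y-1)\bigr)$ that appear at the end of the paper's proof.

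However, the step you yourself flag as the ``main obstacle'' is precisely where the proof lives, and your proposal contains no idea that would close it. Your general term carries two independent spatial indices $a,b\geq 1$ (outer occupation variables at $\pm a$ and $\pm b$, centered spin at $0$), while hypothesis \eqref{criterion} only controls the one--parameter family in which the two outer occupation variables sit at the \emph{same} site and the centered spin sits strictly to their right. Translations and the two--point symmetry $\bbE[\xi_0(0)\xi_r(z)]=\bbE[\xi_0(0)\xi_r(-z)]$ cannot reduce a genuinely triple correlation with three distinct spatial locations to that family. The missing input is model--specific: the orientation/decorrelation property of the East model (Lemma~\ref{orientation} in the paper), stating that in a product of functions evaluated at different times, a factor whose spatial support lies strictly to the left of all the others and which is centered under $\nu_\rho$ decouples, so the expectation vanishes. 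This kills all terms in which the three space--time points occupy pairwise distinct spatial positions (in particular all off--diagonal pairs $a\neq b$ after translation); the surviving configurations are then gathered into the paper's six cases $A_{|},A_{\urcorner},A_{\ulcorner},A_{\llcorner},A_{\lrcorner},A_{\rangle}$ (and likewise for $B,C,D$), and translation invariance plus symmetry of the heat kernel make everything cancel except the single sum $\sum_{y\geq 1}\bigl(p_t(y+1)-p_t(y-1)\bigr)\bigl(p_s(y+1)-p_s(y-1)\bigr)\,\bbE_{\nu_\rho}^{\mathrm{east}}\bigl[\xi_0(0)(2\xi_t(y)-1)\xi_{t+s}(0)\bigr]$, whose positivity is exactly \eqref{criterion}. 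Without this decorrelation mechanism (or an equivalent substitute) the hypothesis is simply not strong enough to control your double sum, so as it stands the argument has a genuine gap.
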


Let us explain why we expect \eqref{criterion} to be true. It is clearly equivalent to the following inequality:
\[
\bbP_{\nu_{1/2}}^{\mathrm{east}}\left(\xi_0(0)=1,\xi_t(y)=1,\xi_{t+s}(0)=1\right)>\bbP_{\nu_{1/2}}^{\mathrm{east}}\left(\xi_0(0)=1,\xi_t(y)=0,\xi_{t+s}(0)=1\right)
\]
Since we are at density $1/2$ and due to the orientation of the East model (see Appendix {A}), we have $\bbP_{\nu_{1/2}}^{\mathrm{east}}\left(\xi_0(0)=1,\xi_t(y)=1\right)=\bbP_{\nu_{1/2}}^{\mathrm{east}}\left(\xi_0(0)=1,\xi_t(y)=0\right)=1/4$. Therefore the question is whether it is more likely to keep a particle initially present at time zero when $\xi_t(y)=0$ or $\xi_t(y)=1$. Intuitively, since zeros can send excitations that allow updates of particles to their left, having a particle at $y$ should work towards conserving a particle on its left, e.g. at the origin, which explains why \eqref{criterion} should hold.

The second argument supporting Conjecture~\ref{conj} is given by Proposition \ref{DegDrift}. More precisely, we introduce below a random walk which is a degenerate version of the $\e$--RW with $\e>0$ on the East model and show that it has negative velocity.
\subsubsection{A degenerate drifting RW}\label{4.4}
Let us introduce a degenerate version of the   {$\e$--RW}. Informally, we introduce a new random walk  $(Y_t)_{t \in \bbR_+}$ living on the edges of $\bbZ$ with a hole to the right and a particle to the left. Initially, the walker stands on the first edge on the right of the origin satisfying this condition. If the particle to its left flips into a hole, it jumps instantly to the next edge of this type to its left. If instead the hole to its right flips into a particle, the random walk jumps instantly to the next edge of this type to its right. See Figure~\ref{deg} for an illustration of the different possible jumps for the walker. The latter is a degenerate version of the $\e$--RW. Indeed it can be thought of as jumping at infinite rate to the left (resp.\@ right) when it is sitting on top of a hole (resp.\@ particle).

\begin{center}
\begin{figure}
\includegraphics[scale=.5]{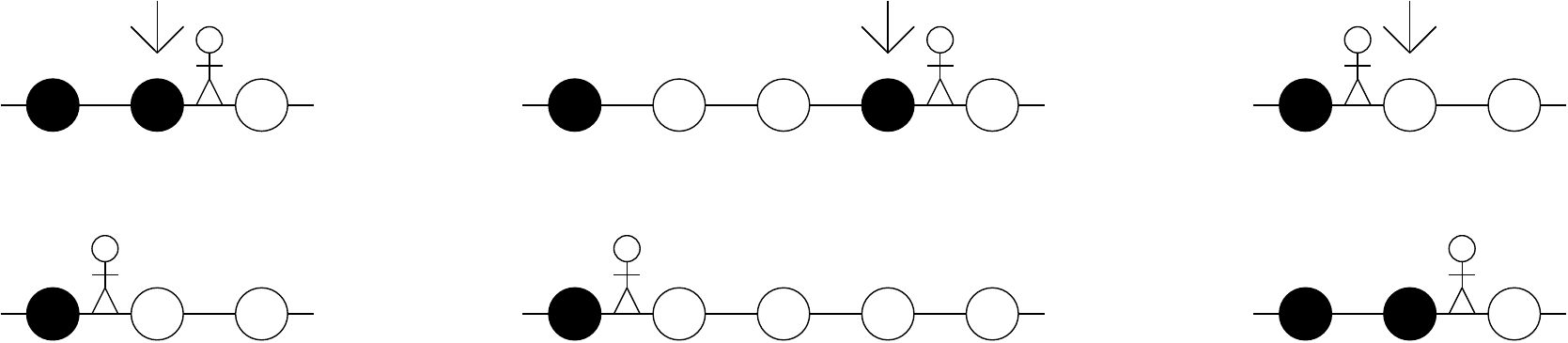}
\caption{Illustration of the different updates in the environment inducing a jump of the degenerate random walker. Particles and holes of the environment are represented by black and white disks respectively. On top, three pre-update situations are depicted, with an arrow pointing the position of next flip occuring in the dynamics of the environment. On the bottom, one can see the result of the spin flip, in particular the induced new position of the walker.}
\label{deg}
\end{figure}
\end{center}

Let us give the precise definition of the joint process $(\xi_t, Y_t)_{t \geq 0}$. To this aim, we parametrize the edges of $\bbZ$ by $1/2+\bbZ$ assigning to each $y \in 1/2+\bbZ$ the edge $\{y^-, y^+\}$, where  $y^+:=y+1/2$ and $y^-:=y-1/2$.  The initial configuration $(\xi_0, Y_0)$ is determined as follows: 
$\xi_0\in\Omega$ is sampled from $\nu_{\rho}$ while  $Y_0:=k+1/2$, where $k$ is the lowest non-negative integer such that   {$\xi_0(k)=1$}, $\xi_0(k+1)=0$. Then the Markov generator of the joint process $(\xi_t,Y_t)$ is given by
\begin{eqnarray}
 L_{deg}f(\xi,y)=\displaystyle\sum_{x\notin{\lbrace y^-,y^+\rbrace}}c^{\rm east}_x(\xi)  \left[\rho(1-\eta(x))+(1-\rho)\eta(x)\right]\left[f(\eta^x,y)-f(\eta,y)\right]\nonumber\\
\qquad\displaystyle +\,(1-\rho) \left[f(\xi^{y^-}, y- k (\xi, y_-))-f(\xi,y)\right]\nonumber\\
\qquad+\,\rho\, c^{\rm east}_{y_+}(\xi)   \left[f(\xi^{y^+},y+1)-f(\xi,y)\right],
\end{eqnarray}
where $ k(\xi, y_-)$ is the first positive integer    {$k$} such that $\xi(y^--k)=1$. 


The series in the r.h.s. corresponds to updates of the environment occuring on sites not belonging to the edge where the walker sits. The second line describes what happens when the particle on the left of the walker disappears, and the third when a particle appears on the right of the walker. 

Notice that the above joint process is well defined, since at any time there are infinitely many sequences of particle--hole when the initial configuration is sampled from $\nu_{\rho}$ with density $\rho\in (0,1)$.  Moreover, the evolution of $\xi_t$ is the standard East model at equilibrium.

At any density $\rho$, the random walk $(Y_t)_{t\geq 0}$ has a negative velocity:
\begin{Proposition}\label{DegDrift}
For any $\rho\in(0,1)$ it holds $\limsup_{t\rightarrow \infty} \frac{Y_t}{t}<0 $ a.s.
\end{Proposition}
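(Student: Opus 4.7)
The strategy is to analyze the environment-seen-from-walker process, to identify the asymptotic velocity through an ergodic theorem, and then to prove negativity of the drift via moment estimates on the invariant measure. First I would observe that in the joint Markov process $(\xi_t, Y_t)$ the environment $(\xi_t)$ evolves autonomously as the East model, so $\xi_t \sim \nu_\rho$ for all $t \geq 0$. Dynkin's formula applied to $f(\xi, y) := y$ yields
\begin{equation*}
Y_T - Y_0 \;=\; M_T \;+\; \int_0^T j(\xi_s, Y_s)\, ds, \qquad j(\xi, y) := \rho\bigl(1 - \xi(y^+ + 1)\bigr) \;-\; (1-\rho)\, k(\xi, y^-),
\end{equation*}
with $M_T$ a mean-zero martingale. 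I would then pass to $\eta_t := \tau_{Y_t^-}\xi_t$, a Markov process on $\Omega_* := \{\eta : \eta(0) = 1,\ \eta(1) = 0\}$ with an explicit generator combining East flips at $x \notin \{0, 1\}$ with re-indexing transitions corresponding to walker jumps. Using the positive spectral gap of the East model together with the mixing introduced by the walker's jumps (notably the configuration-independent left-jump rate $1-\rho$), I would prove existence, uniqueness and ergodicity of an invariant measure $\pi$ for $(\eta_t)$ on $\Omega_*$. A law of large numbers for additive functionals then gives $Y_T/T \to \pi(\bar j)$ a.s., with $\bar j(\eta) := \rho(1-\eta(2)) - (1-\rho)\, k(\eta, 0)$.

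The easy regime is $\rho \leq 1/2$: since $c^{\mathrm{east}}_{y^+}(\xi) \leq 1$ and $k \geq 1$ pointwise, $j \leq 2\rho - 1 \leq 0$, with strict inequality in expectation. For general $\rho \in (0, 1)$ the target pair of moment estimates is $\pi(k(\eta, 0)) \geq 1/\rho$ and $\pi(1 - \eta(2)) \leq 1 - \rho$, which combined yield
\begin{equation*}
\pi(\bar j) \;\leq\; \rho(1-\rho) - \frac{1-\rho}{\rho} \;=\; -\,\frac{(1-\rho)^2 (1+\rho)}{\rho} \;<\; 0.
\end{equation*}

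The main obstacle will be proving these two moment bounds. A direct computation of $\pi(\tilde L f)$ on the test function $f(\eta) = \eta(-1)$ shows that $\pi$ does \emph{not} coincide with $\nu_\rho$ restricted to $\Omega_*$, so the estimates cannot be read off from the product structure of $\nu_\rho$. Heuristically, however, the walker tracks a particle-hole interface and hence sees more holes to its immediate left (making $k$ larger on average) and more particles to its right (making $1 - \eta(2)$ smaller) than a typical site. I would attempt to make this rigorous by coupling the environment-seen-from-walker with an East model in equilibrium and exploiting a PASTA-type argument at the Poisson left-jump times, whose constant rate $1 - \rho$ is independent of the configuration, in order to transfer the relevant marginals from $\nu_\rho$ to $\pi$.
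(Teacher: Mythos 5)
Your strategy (environment seen from the degenerate walker, ergodic LLN $Y_T/T\to\pi(\bar j)$, then moment bounds on $\pi$) is genuinely different from the paper's argument, but as written it has two gaps that are precisely the hard content. First, the existence, uniqueness and ergodicity of the invariant measure $\pi$ on $\Omega_*$, together with the law of large numbers for the additive functional, do not follow from anything available here: Theorem \ref{chorale} and the machinery of \cite{mainprobe} apply only to perturbations with $\|\hat L_\e\|<\gamma$, whereas the degenerate walk is in no sense a small perturbation of a reversible walk (it is the ``$\e=\infty$'' limit), and the East model is neither attractive nor uniformly mixing, so no off-the-shelf uniqueness/ergodicity result applies. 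You would also need integrability of the unbounded jump length $k(\eta,0)$ under $\pi$ and under the actual path law to control both the compensator and the martingale term in your Dynkin decomposition; this is asserted, not proved. Second, the two moment bounds $\pi(k(\eta,0))\geq 1/\rho$ and $\pi(1-\eta(2))\leq 1-\rho$ are exactly where the difficulty of the problem sits, and you leave them as targets. There is no monotonicity to lean on (the paper stresses that East is non-attractive), and the PASTA-type observation that left jumps occur at constant rate $1-\rho$ only identifies a Palm distribution with a time average; it does not transfer marginals from $\nu_\rho$ to $\pi$. The direction of the bounds is itself not evidently correct: the analysis of the density profile in Corollary \ref{oscar} shows that for $\e>0$ the walker sees density \emph{above} $\rho$ to its left, which at least puts your heuristic (``more holes to its immediate left'') in doubt, even granting the different conditioning on $\{\eta(0)=1,\eta(1)=0\}$. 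So the negativity of $\pi(\bar j)$ for $\rho>1/2$ remains unproved; only the pointwise bound for $\rho<1/2$ is essentially complete (and even there a martingale/integrability argument is still needed).

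For comparison, the paper avoids the stationary measure of the walker's environment altogether: it couples $(Y_t)$ with the front process $(F_t)$ of \cite{O2} (the tracked zero of an East configuration), using the same graphical construction and the initial matching $F_0=Y_0+1/2$, which forces $F_t\geq Y_t+1/2$ for all $t$; the known linear negative speed of the front (Lemma 3.2 in \cite{O2}, plus Borel--Cantelli) then gives $\limsup_t Y_t/t\leq\limsup_t F_t/t\leq-\underline v<0$ for every $\rho\in(0,1)$. If you want to salvage your route, you would need to either prove the ergodicity and the moment bounds directly (a substantial project), or find a comparison/coupling argument of the type the paper uses to bypass them.
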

  {The proof of the above proposition  is given in Section \ref{proofDegDrift}.}
 
\subsection{Density profile}
By applying  Proposition \ref{denso} to specific environments a more detailed description of the observed density profile can be derived. In the case of the East model, we get the following.

 \begin{Corollary}\label{oscar} Suppose that $2|\e|$ is smaller than the spectral gap of $\mathcal{L}_{\rm east}$. Then 
 for any $x\in\bbZ$, it holds \begin{equation}\label{limitdensity}
\mu_\e(\eta(x))=\rho+2\e\int_0^\infty  u(s)\left[p_s(x-1)-p_s(x+1)\right]ds+O(\e^2),
\end{equation}
where $$u(s):=\rho^2-\nu_{\rho}\big(\xi(0)\bbE^{\rm east}_\xi[\xi_s(0)]\big)$$ is a negative increasing function on $[0,+\infty)$, with $u(0)=-\rho(1-\rho)$ and $|u(s)|\leq \rho(1-\rho)^{1/2}e^{-\l s}$. 

In particular, if $x<0$ (resp.\@ $x>0$), for $\e>0$ small enough, 
\begin{equation}\label{couledesource}
\mu_\e(\eta(x))>\rho\quad(\text{resp. }<\rho).
\end{equation}
Moreover, for any $x,y\in\mathbb Z$, we have that 
\begin{equation}\label{difference}
\begin{aligned}
&\mu_\e(\eta(x))-\mu_\e(\eta(x+y))=\e\int_0^\infty  u(s)[p_s(x-1)-p_s(x+1)\\
&-p_s(x+y-1)+p_s(x+y+1) ]ds+O(\e^2).
\end{aligned}
\end{equation}
 \end{Corollary}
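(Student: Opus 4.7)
The plan is to substitute $f(\eta)=\eta(x)$ into the expansion~\eqref{boh} of Proposition~\ref{denso}. With this choice $f(\tau_{y\pm 1}\xi_s)=\xi_s(x+y\pm 1)$, and a shift of the summation index recasts the $\e$--linear correction as
\[
2\e\int_0^\infty\!ds\,\sum_{z\in\bbZ}\bigl[p_s(z-1-x)-p_s(z+1-x)\bigr]\,C(s,z),\qquad C(s,z):=\nu_{\rho}\bigl(\xi(0)\,\bbE^{\rm east}_{\xi}[\xi_s(z)]\bigr).
\]
Since $\sum_z\bigl[p_s(z-1-x)-p_s(z+1-x)\bigr]=0$, one is free to replace $C(s,z)$ by $C(s,z)-\rho^2$ inside the sum.

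The heart of the proof, and the step I expect to require the real work, is the identity $C(s,z)=\rho^2$ for every $z\neq 0$. For $z\geq 1$ this exploits the directionality of the East model: because the constraint $c^{\rm east}_w=1-\xi(w+1)$ depends only on the site immediately to the East, the half--lattice $\{w\geq z\}$ is autonomous under $\mathcal{L}_{\rm east}$, so $\bbE^{\rm east}_\xi[\xi_s(z)]$ is a function of $(\xi(w))_{w\geq z}$ alone and thus $\nu_\rho$--independent of $\xi(0)$. For $z\leq -1$ the reversibility of $\nu_\rho$ w.r.t.\@ $\mathcal{L}_{\rm east}$ (see Remark~\ref{WestFA}) yields $C(s,z)=\nu_\rho\bigl(\xi(z)\,\bbE^{\rm east}_\xi[\xi_s(0)]\bigr)$, and the same autonomy argument applied to $\{w\geq 0\}$ makes $\bbE^{\rm east}_\xi[\xi_s(0)]$ a function of $(\xi(w))_{w\geq 0}$, hence $\nu_\rho$--independent of $\xi(z)$. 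Once this is established, only $z=0$ survives in the sum, producing $-u(s)\bigl[p_s(-1-x)-p_s(1-x)\bigr]$, which by the symmetry $p_s(k)=p_s(-k)$ equals $u(s)\bigl[p_s(x-1)-p_s(x+1)\bigr]$; substitution gives~\eqref{limitdensity}.

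The analytic properties of $u$ are then standard spectral facts. Rewriting $u(s)=-\nu_\rho\bigl((\xi(0)-\rho)\,e^{s\mathcal{L}_{\rm east}}(\xi(0)-\rho)\bigr)$ and using that $\mathcal{L}_{\rm east}$ is self--adjoint with nonpositive spectrum on $L^2(\nu_\rho)$, the spectral theorem represents $-u(s)$ as $\int_0^\infty e^{-s\lambda}\,d\sigma(\lambda)$ for a finite nonnegative measure $\sigma$; this quantity is nonnegative and non--increasing in $s$, so $u\leq 0$ and $u$ is non--decreasing. At $s=0$ one has $u(0)=\rho^2-\nu_\rho(\xi(0)^2)=-\rho(1-\rho)$, and since $\xi(0)-\rho$ is $\nu_\rho$--orthogonal to constants, $\operatorname{supp}\sigma\subset[\gamma,+\infty)$ (where $\gamma$ is the spectral gap of $\mathcal{L}_{\rm east}$), which yields the stated exponential decay.

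Finally \eqref{couledesource} is obtained by sign--chasing in \eqref{limitdensity}: $u(s)<0$ for every $s>0$, while the unimodality of the SRW transition kernel $p_s$ around $0$ and the easy comparison $|x-1|>|x+1|$ for $x\leq -1$ (resp.\@ $|x-1|<|x+1|$ for $x\geq 1$) show that $p_s(x-1)<p_s(x+1)$ (resp.\@ $p_s(x-1)>p_s(x+1)$); thus for $\e>0$ small the integral is strictly positive (resp.\@ strictly negative), yielding the stated inequalities. The difference formula~\eqref{difference} is immediate by subtracting two instances of~\eqref{limitdensity}.
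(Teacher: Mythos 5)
Your proof is correct, and for the main expansion \eqref{limitdensity} it follows the paper's route: plug $f(\eta)=\eta(x)$ into \eqref{boh} and kill every term $z\neq 0$ by the decorrelation $\nu_\rho\big(\xi(0)\bbE^{\rm east}_\xi[\xi_s(z)]\big)=\rho^2$; your inline justification of that decorrelation (autonomy of the half-line $\{w\geq z\}$ under the East constraint for $z\geq 1$, plus reversibility to reduce $z\leq -1$ to the same situation) is exactly the content and proof mechanism of the paper's Lemma~\ref{orientation}, which the paper simply cites. Where you genuinely diverge is in the analysis of $u$: the paper gets the exponential bound by writing $|u(s)|=\nu_\rho\big([S^{\rm east}(s/2)g]^2\big)$ with $g=\xi(0)-\rho$, and establishes negativity and monotonicity probabilistically, via the graphical construction, conditioning on the first legal ring $T_0$ at the origin and using the distinguished zero, which yields the stronger exact identity $u(s)=-\rho(1-\rho)\,\mathbb{P}^{\rm east}_{\nu_\rho}(T_0>s)$; you instead invoke the spectral theorem for the self-adjoint generator, representing $-u(s)=\int e^{-s\kappa}\,d\sigma(\kappa)$ with $\sigma\geq 0$ of total mass $\rho(1-\rho)$ supported in $[\lambda,+\infty)$, which delivers $u(0)$, the sign, the monotonicity and the exponential decay in one stroke. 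Your route is shorter and purely functional-analytic; the paper's route gives more (an explicit probabilistic formula for $u$), though that extra information is not needed for the Corollary. Two small touch-ups: to get the \emph{strict} statements (``negative increasing'') add the one-line observation that $\sigma$ has positive mass on $[\lambda,\infty)$, so $-u(s)>0$ and $\tfrac{d}{ds}(-u)(s)\leq-\lambda(-u)(s)<0$; and note that subtracting two instances of \eqref{limitdensity} produces a prefactor $2\e$, so the $\e$ in \eqref{difference} as printed appears to be a typo rather than an issue with your derivation. Your sign-chasing for \eqref{couledesource} is fine, since only the same-parity comparison $p_s(x-1)$ versus $p_s(x+1)$ is needed.
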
 
 
 \begin{remark}[{\bf Equilibrium affected around the origin}]
From Theorem 3 in \cite{mainprobe}, we know that, as $|x|\rightarrow\infty$, the density profile of the environment seen by the walker, $(\mu_\e(\eta(x)))_{x\in\bbZ}$, approaches the constant profile $\rho$ (corresponding to the equilibrium of the environment process). This reflects the fact that the $\e$-RW is ``sitting at the origin" of the process $(\eta^{(\e)}_t)_{t\geq 0}$. 
Moreover, recall that for $\e>0$ the random walk has a tendency to jump to the right when sitting on top of particles, and vice versa on top of holes. Therefore, heuristically, it should spend more time with a particle to its left and a hole to its right. \eqref{couledesource} and \eqref{difference} confirm this description for $\e$ small enough. Indeed, from \eqref{difference}, it is simple to see that $\mu_\e(\eta(-1))>\mu_\e(\eta(0))>\mu_\e(\eta(1))$ for small $\e>0$.
These observations are summarized in the numerics in Figure \ref{dens0.5}.
\end{remark}
 
\begin{figure}
\begin{center}
\includegraphics[scale=.3]{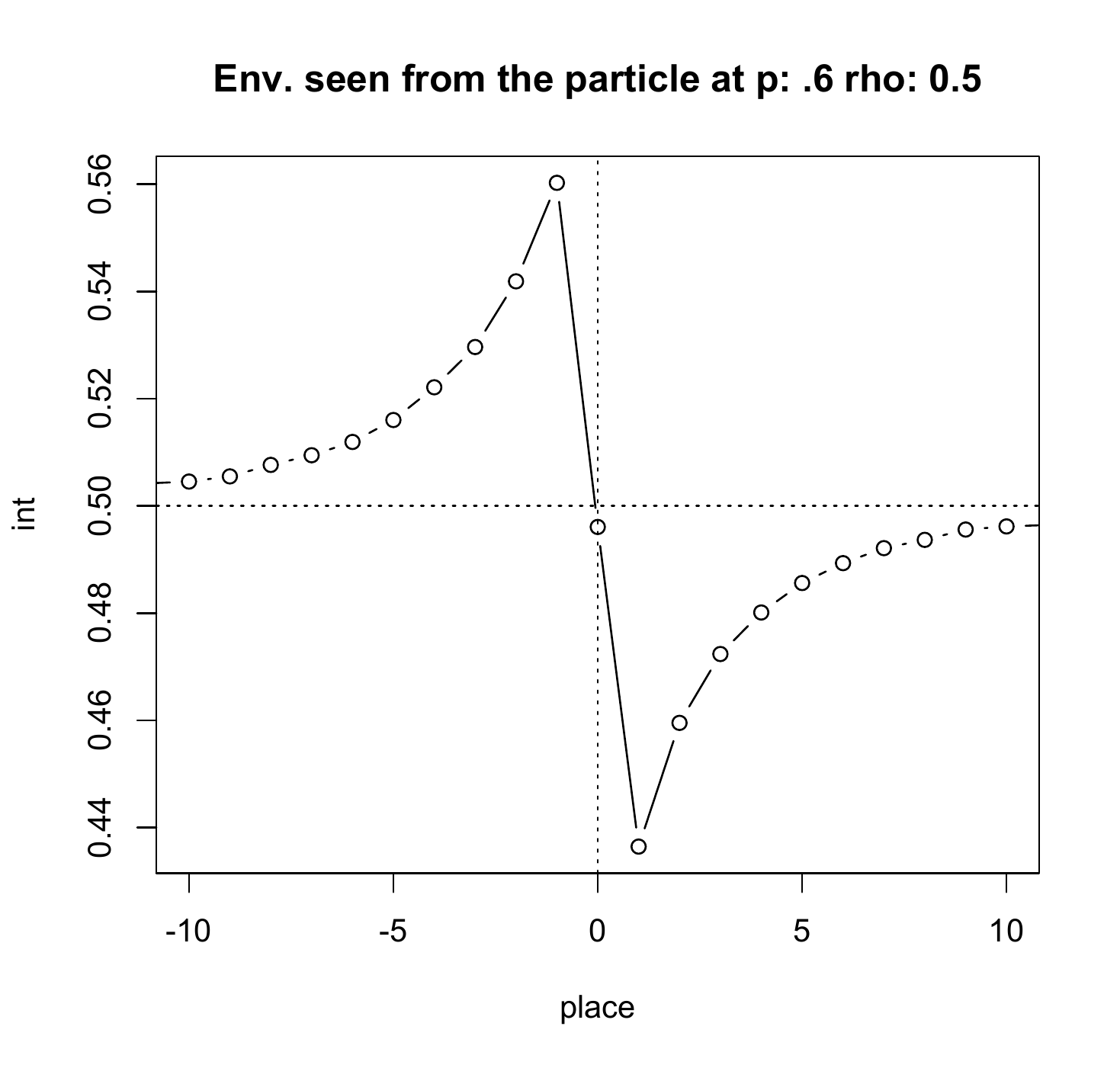}
\caption{Simulation of the density profile $\mu_\e(\eta(x))$, $x=-10,\ldots,10$ at $\rho=1/2$ and $\e=0.1$.}
\label{dens0.5}
\end{center}
\end{figure}

\section{Proofs}\label{6}

\subsection{Proof of Theorem~\ref{guepe}}\label{proofguepe}
  {For the sake of clarity, we give the proof  for the  $\e$--RW. The proof  can be easily generalized to the transition rates satisfying Assumption \ref{brioche}.}

We first recall a lemma from \cite{mainprobe}, Lemma 10.1 therein. The notation is adapted to this specific context.
\begin{Lemma}\label{nasty}
Under Assumptions~\ref{piqure}, $t\geq 0$, $x\in\bbZ$, then for all $n\geq 0$,  $f: \O \to \bbR$ local function and  $\eta\in\O$, 
it holds
\begin{equation}\label{pingu}
\begin{split}
\hat{L}_\e S_\e^{(n)}(s)f(\eta)= &  \int_0^tdt_1\int_0^{t_1}dt_2\ldots\int_0^{t_{n-1}}dt_n\sum_{{\mathbf{z}\in \{\pm 1\}^{n+1},}}\sum_{\d\in\lbrace 0,1\rbrace^{n+1}}(-1)^{|\d|}\\
& \times \bbE_\eta^{(0)}\left[\left(\prod_{i=1}^{n+1}\hat{r}_\e\left(z_i,\tau_{(\d\cdot z)_{[i-1]}}\eta_{t-t_{i-1}}\right)\right)f\left(\tau_{(\mathbf{\d\cdot \mathbf{z}})_{[n+1]}}\eta_{t}\right)\right]\,,\end{split}
\end{equation}
where $\hat{r}_\e(\pm 1,\eta):=\pm\e(2\eta(0)-1)$, $|\d|:=\sum_{i=1}^{n+1}(1-\d_i)$, $(\d\cdot \mathbf{z})_{[i]}:=\d_1z_1+\ldots+\d_i z_i$, $(\d\cdot \mathbf{z})_{[0]}:=0$ and $t_0:=t$.
\end{Lemma}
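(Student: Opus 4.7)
The proof will proceed by induction on $n\ge 0$. The three ingredients I expect to use repeatedly are: the algebraic expansion
\[
\hat L_\e g(\eta)=\sum_{z\in\{\pm 1\}}\hat r_\e(z,\eta)[g(\t_z\eta)-g(\eta)]=\sum_{z\in\{\pm 1\}}\sum_{\delta\in\{0,1\}}(-1)^{1-\delta}\hat r_\e(z,\eta)g(\t_{\delta z}\eta),
\]
the translation invariance of the unperturbed semigroup (consequence of Assumption~\ref{piqure}(iii) and of the fact that the unperturbed walker is the simple symmetric random walk), so that $S(t)(g\circ \t_y)=(S(t)g)\circ \t_y$ and equivalently
$\bbE^{(0)}_{\t_y\eta}[g(\eta_t)]=\bbE^{(0)}_\eta[g(\t_y\eta_t)]$, and the Markov property of the environment seen from the unperturbed walker.

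\textbf{Base case $n=0$.} Since $S^{(0)}_\e(t)f=S(t)f=\bbE^{(0)}_\cdot[f(\eta_t)]$, the decomposition above gives
\[
\hat L_\e S(t)f(\eta)=\sum_{z_1\in\{\pm 1\}}\sum_{\delta_1\in\{0,1\}}(-1)^{1-\delta_1}\hat r_\e(z_1,\eta)\bbE^{(0)}_\eta[f(\t_{\delta_1 z_1}\eta_t)],
\]
which is exactly \eqref{pingu} for $n=0$ once we note that the empty iterated integral equals one, that $t_0:=t$ makes $t-t_0=0$ and $\eta_{t-t_0}=\eta$, and that $(\delta\cdot z)_{[0]}=0$, so $\hat r_\e(z_1,\tau_0\eta_{t-t_0})=\hat r_\e(z_1,\eta)$.

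\textbf{Inductive step.} Using the recursion \eqref{Sn} and the fact that $\hat L_\e$ is bounded, one has
\[
\hat L_\e S^{(n+1)}_\e(t)f(\eta)=\int_0^t \bigl[\hat L_\e S(t-s)\bigr]\bigl[\hat L_\e S^{(n)}_\e(s)f\bigr](\eta)\,ds.
\]
For fixed $s$, I would apply the base-case identity to the function $g:=\hat L_\e S^{(n)}_\e(s)f$ to rewrite $\hat L_\e S(t-s)g(\eta)$, and then apply the inductive hypothesis to express $g(\t_{\delta_1 z_1}\eta_{t-s})$ inside the expectation. This produces nested expectations of the form $\bbE^{(0)}_\eta\big[\,\cdot\,\bbE^{(0)}_{\eta_{t-s}}[\cdots]\big]$, together with a product of $\hat r_\e$ factors whose arguments are already translated by $\t_{(\delta\cdot z)_{[\cdot]}}$. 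Three bookkeeping steps then finish the argument:
(i) translation invariance is used to push the outer translation $\t_{\delta_1 z_1}$ inside the inner expectation, so that the inner semigroup starts at $\eta_{t-s}$ but the cumulative translation becomes $\t_{(\delta\cdot z)_{[i]}}$ for $i\ge 2$ (this is exactly where the running sums $(\delta\cdot z)_{[i]}$ arise);
(ii) the Markov property of the environment as seen from the unperturbed walker collapses the two nested expectations into a single $\bbE^{(0)}_\eta$, replacing time arguments $r$ of the inner process by $(t-s)+r=t-t_{i-1}$ after the reindexing $t_1:=s$, $t_i:=(\text{inner integration variable})$ for $i\ge 2$;
(iii) the outer integration variable $s$ becomes $t_1$, the inner nested integrals $\int_0^{s}dt_2\cdots\int_0^{t_{n-1}}dt_{n+1}$ sit exactly inside $\int_0^t dt_1$, giving the simplex $0\le t_{n+1}\le\cdots\le t_1\le t=t_0$.

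Since the signs $(-1)^{1-\delta_i}$ multiply to $(-1)^{|\delta|}$ and the factors $\hat r_\e(z_i,\t_{(\delta\cdot z)_{[i-1]}}\eta_{t-t_{i-1}})$ line up for $i=1,\ldots,n+2$, with $f$ evaluated at $\t_{(\delta\cdot z)_{[n+2]}}\eta_t$, this is \eqref{pingu} at level $n+1$.

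\textbf{Expected main obstacle.} The proof is conceptually straightforward, but the bookkeeping is delicate: one must carefully track how the translations generated by successive applications of $\hat L_\e$ accumulate into the partial sums $(\delta\cdot z)_{[i]}$, and how the relation $t_0=t$ together with repeated uses of the Markov property converts the ``local'' time arguments $s,\,s-(\text{inner time})$, etc., into the uniform form $t-t_{i-1}$ appearing in \eqref{pingu}. A clean way to handle this is to carry out the induction after fixing notation at the outset (say, by writing out the case $n=1$ in full and then mimicking the combinatorics), rather than trying to manipulate the $n$-fold expression symbolically from the start. The statement being for the $\e$--RW only streamlines this by restricting the sums over $z_i$ to $\{\pm 1\}$ and fixing the explicit form of $\hat r_\e$, but otherwise the argument and the obstacles are identical to those in the general setting.
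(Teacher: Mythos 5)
The paper does not actually prove Lemma~\ref{nasty}: it is imported verbatim from Lemma~10.1 of \cite{mainprobe}, so there is no in-paper argument to compare against. Your induction on $n$ via the Duhamel recursion \eqref{Sn} is the natural derivation and, as far as I can check the bookkeeping (times $t-t_{i-1}$, cumulative shifts $(\d\cdot z)_{[i-1]}$, signs $(-1)^{|\d|}$, and the simplex of integration), it is correct; it is essentially the argument one expects behind the cited lemma. The only step you should make explicit rather than just invoke is the translation covariance for \emph{multi-time} path functionals, $\bbE^{(0)}_{\tau_y\zeta}[F(\zeta_{u_1},\dots,\zeta_{u_k})]=\bbE^{(0)}_{\zeta}[F(\tau_y\zeta_{u_1},\dots,\tau_y\zeta_{u_k})]$, which you need when pushing $\tau_{\delta_1 z_1}$ into the inner expectation; it follows from Assumption~\ref{piqure}--(iii) combined with the Markov property (induction on $k$), but it is not literally the single-time commutation statement you quote.
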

Formula \eqref{pingu} has to be thought with  no time integration in the  degenerate case $n=0$. The central ingredient in the proof of Theorem~\ref{guepe} is the following lemma.

\begin{Lemma}\label{fight}
Assume Assumptions \ref{piqure}.
Then, for all $n\geq0$, 
we have 
\begin{equation}\label{second}
\mu\left(\hat{L}_\e S_\e^{(2n)}(t)j^{(\e)}(\eta)\right)=0.
\end{equation}
\end{Lemma}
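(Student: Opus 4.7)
\medskip

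\noindent\textbf{Proof plan.}
The plan is to start from Lemma~\ref{nasty} applied with $f=j^{(\e)}=2\e g$, where $g(\eta):=2\eta(0)-1$, and to exploit two structural inputs: the reversibility of $(\eta^{(0)}_t)$ with respect to $\mu$ (which follows from the detailed balance \eqref{revrate} of Assumption~\ref{lione} together with Assumption~\ref{piqure}(i)) and the translation invariance of $\mu$ and of $\mathcal{L}_{\rm ew}^{(0)}$. A first reduction is that only the term $\delta=\mathbf{1}$ contributes to the sum in \eqref{pingu}. Indeed, if $\delta_i=0$ for some $i$ then no position $(\delta\cdot \mathbf{z})_{[j]}$ depends on $z_i$, while $z_i$ enters the summand only through the linear factor $\hat r_\e(z_i,\cdot)=z_i\,\e(2\eta(\cdot)-1)$, so summing $z_i\in\{\pm 1\}$ kills the contribution. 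With $\delta=\mathbf{1}$ one has $(-1)^{|\delta|}=1$ and $(\delta\cdot\mathbf{z})_{[i]}=z_{[i]}:=z_1+\cdots+z_i$, so Lemma~\ref{nasty} rewrites as
\[
\mu(\hat L_\e S_\e^{(n)}(t)j^{(\e)})=2\e^{n+2}\,I_n(t),
\]
where, writing $g_p:=g\circ\tau_p$, $t_0:=t$, $u_i:=t-t_{i-1}$ and $\Delta_n(t):=\{t\geq t_1\geq\cdots\geq t_n\geq 0\}$,
\[
I_n(t):=\int_{\Delta_n(t)}\!d\mathbf{t}\sum_{\mathbf{z}\in\{\pm 1\}^{n+1}}\!\Bigl(\prod_{i=1}^{n+1}z_i\Bigr)F(\mathbf{z};\mathbf{t}),\quad F(\mathbf{z};\mathbf{t}):=\mu\bbE^{(0)}_\eta\Bigl[\prod_{i=1}^{n+1}g_{z_{[i-1]}}(\eta_{u_i})\cdot g_{z_{[n+1]}}(\eta_t)\Bigr].
\]

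The heart of the proof is the identity $F(\mathbf{z};\mathbf{t})=F(\tilde{\mathbf{z}};\tilde{\mathbf{t}})$ for the involutions
\[
\tilde z_i:=-z_{n+2-i},\qquad \tilde t_i:=t-t_{n+1-i}.
\]
To establish it I would (a) reverse time inside the expectation, replacing each $\eta_{u_i}$ by $\eta_{t-u_i}$, using the reversibility of $(\eta^{(0)}_t)$; (b) shift every spatial label by $-z_{[n+1]}$ using translation invariance; (c) reorder the factors in ascending time and verify the bookkeeping $z_{[i]}-z_{[n+1]}=-\sum_{j=i+1}^{n+1}z_j=\tilde z_{[n+1-i]}$, together with $t-u_{n+3-i}=\tilde u_i$. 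Conceptually, the involution is walker-path reversal composed with time reversal, with the translation bringing the path's endpoint back to the origin.

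Granted this identity, I would reindex the $\mathbf{z}$-sum by $\mathbf{w}:=\tilde{\mathbf{z}}$ (so that $\prod z_i=(-1)^{n+1}\prod w_i$) and change variables $\mathbf{t}\mapsto\tilde{\mathbf{t}}$ in the integral, which is an involution of $\Delta_n(t)$ with unit absolute Jacobian. Combining the two substitutions yields $I_n(t)=(-1)^{n+1}I_n(t)$. For $n=2m$ even we get $I_{2m}(t)=0$, which is \eqref{second}.

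I expect the main obstacle to be the careful bookkeeping in steps (a)--(c), which must pair the time-reversed and translation-shifted spatial labels precisely with the involuted walker steps $\tilde{\mathbf{z}}$. A reassuring sanity check is the case $n=0$, where the identity reduces to $\mu\bigl(g\cdot S(t)(g\circ\tau_1-g\circ\tau_{-1})\bigr)=-\mu\bigl(g\cdot S(t)(g\circ\tau_1-g\circ\tau_{-1})\bigr)$, which follows immediately from self-adjointness of $S(t)$ on $L^2(\mu)$ and translation invariance of $\mu$. The key conceptual point is that we only use reversibility of $(\eta^{(0)}_t)$ and translation invariance, not any spatial reflection symmetry of $\mathcal{L}_{\rm env}$; hence the argument applies to strongly asymmetric environments such as the East model.
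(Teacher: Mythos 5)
Your proposal is correct and follows essentially the same route as the paper: reduce to $\delta=\mathbf{1}$ by linearity of $\hat r_\e(z_i,\cdot)$ in $z_i$, then apply the time-reversal/step-reversal involution $(\mathbf{t},\mathbf{z})\mapsto(\tilde{\mathbf{t}},\tilde{\mathbf{z}})$ (the paper's $\mathbf{t}^*,\overline{\mathbf{z}}$), justified by reversibility and translation invariance, to get the sign $(-1)^{n+1}$ and hence vanishing of the even terms. The only difference is an immaterial overall prefactor bookkeeping; the argument and its ingredients coincide with the paper's proof of Lemma~\ref{fight}.
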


\begin{proof}\label{nastybis}

For all $n\geq 1$, for all $\eta\in\O$,  by applying Lemma~\ref{nasty} to the function $j^{(\e)}(\eta)=2\e (2\eta(0)-1)=\pm 2 \hat{r}_\e(\pm 1,\eta)$, we can write

\begin{eqnarray}
\hat{L}_\e S_\e^{(n)}(t)j^{(\e)}(\eta)&=&\displaystyle\int_0^tdt_1\int_0^{t_1}dt_2\ldots\int_0^{t_{n-1}}dt_{n}\sum_{\mathbf{\d}\in\lbrace 0,1\rbrace^{n+1}}(-1)^{|\mathbf{\d}|}\sum_{{\mathbf{z}\in \{\pm 1\}^{n+1}}}\nonumber\\\notag
&&\displaystyle\,\bbE_\eta^{(0)}\left[\left(\prod_{i=1}^{n+1}\e z_if\left(\tau_{(\mathbf{\d\cdot z})_{[i-1]}}\eta_{t-t_{i-1}}\right)\right)2\e f\left(\tau_{(\mathbf{\d\cdot z})_{[n+1]}}\eta_{t}\right)\right],\\
&=&2\e^{n+2}\int_0^tdt_1\int_0^{t_1}dt_2\ldots\int_0^{t_{n-1}}dt_{n}\sum_{\mathbf{\d}\in\lbrace 0,1\rbrace^{n+1}}\sum_{{\mathbf{z}\in \{\pm 1\}^{n+1}}}\nonumber\\\label{cara}
&&\displaystyle\,(-1)^{|\d|}\left(\prod_{i=1}^{n+1}z_i\right)\bbE_\eta^{(0)}\left[\left(\prod_{i=1}^{n+2}f\left(\tau_{(\mathbf{\d\cdot z})_{[i-1]}}\eta_{t-t_{i-1}}\right)\right)\right].
\end{eqnarray}

with $f(\eta):=(2\eta(0)-1)$ and $t_{n+1}:=0, t_0:=t$.

To shorten the notation, write $A_t:=\lbrace \mathbf{t}\in\bbR^{n} , 0\leq t_{n}\leq\ldots\leq t_1\leq t\rbrace$. For $\mathbf{t}\in A_t$, $\mathbf{\d}\in\lbrace 0,1\rbrace^{n+1}$, $\mathbf{z}\in \{\pm 1\}^{n+1}$, let
\begin{equation}\label{phi}
\phi_n(\mathbf{t},\mathbf{\d},\mathbf{z}):=\bbE_\mu^{(0)}\left[\left(\prod_{i=1}^{n+2}f\left(\tau_{(\mathbf{\d\cdot z})_{[i-1]}}\eta_{t-t_{i-1}}\right)\right)\right].
\end{equation}

Then, by \eqref{cara} and \eqref{phi}, for any $n\geq 1$, we can write that

\begin{align}\label{cru}
\mu\left(\hat{L}_\e S_\e^{(n)}(t)j^{(\e)}(\eta)\right)
=2\e^{n+2}\sum_{\mathbf{\d}\in\lbrace 0,1\rbrace^{n+1}}
(-1)^{|\d|}\sum_{{\mathbf{z}\in \{-1,+1\}^{n+1}}}
\left(\prod_{i=1}^{n+1}z_i\right)\int_{A_t}\mathrm{d}\mathbf{t}\phi_n(\mathbf{t},\mathbf{\d},\mathbf{z}),
\end{align}

We are now going to show that, for any $n\geq 1$, \eqref{cru} reduces to 
\begin{align}\label{first}
\mu\left(\hat{L}_\e S_\e^{(n)}(t)j^{(\e)}(\eta)\right)
=2\e^{n+2}(-1)^{n+1}\sum_{{\mathbf{z}\in \{-1,+1\}^{n+1}}}
\left(\prod_{i=1}^{n+1}z_i\right)\int_{A_t}\mathrm{d}\mathbf{t}\phi_n(\mathbf{t},\mathbf{1},\mathbf{z}),
\end{align}

Take $\d\neq \mathbf{1}$ and let $j\in\lbrace 1,\ldots, n+1\rbrace$ such that $\d_j=0$. 
Then, for every such $\mathbf{\d}\in\lbrace 0,1\rbrace^{n+1}$, we get

\begin{eqnarray}
\sum_{{\mathbf{z}\in \{\pm 1\}^{n+1}}}\left(\prod_{i=1}^{n+1}z_i\right)\phi_n(\mathbf{t},\mathbf{\d},\mathbf{z})&=&\sum_{\underset{z_j=+1}{\mathbf{z}\in \{\pm 1\}^{n+1}}}\left(\prod_{\underset{i\neq j}{i=1,\ldots, n+1}}z_i\right)\phi_n(\mathbf{t},\mathbf{\d},\mathbf{z})\\
&& -\,\sum_{\underset{z_j=-1}{\mathbf{z}\in \{\pm 1\}^{n+1}}}\left(\prod_{\underset{i\neq j}{i=1,\ldots, n+1}}z_i\right)\phi_n(\mathbf{t},\mathbf{\d},\mathbf{z})\nonumber=0,
\end{eqnarray}

where in the last equality we have used that $\phi_n(\mathbf{t},\mathbf{\d},\mathbf{z})$ does not depend on $z_j$ since $\d_j=0$.
Hence, \eqref{first} is proven.

In the next steps we will first use reversibility and then that $n$ is even.

On the one hand, by the change of variable, \[ A_t \ni \mathbf{t}=(t_1, t_2, \dots, t_{n}) \mapsto \mathbf{t}^*:=( t-t_{n}, t-t_{n-1}, \dots, t-t_1) \in A_t,\]
we have 
\begin{equation}\label{changeVar}
\int_{A_t}\mathrm{d}\mathbf{t}\ \phi_n(\mathbf{t},\mathbf{1},\mathbf{z})=\int_{A_t}\mathrm{d}\mathbf{t}\ \phi_n(\mathbf{t}^*,\mathbf{1},\mathbf{z}).
\end{equation}

On the other hand, by reversibility, we can show that 
\begin{equation}\label{revStep}
\phi_n(\mathbf{t},\mathbf{1},\mathbf{z})=\phi_n(\mathbf{t}^*,\mathbf{1},\overline{\mathbf{z}}),
\end{equation}
where $\overline{\mathbf{z}}:=-(z_{n+1},\ldots ,z_1)$.

Let us check \eqref{revStep}. Write first 
\begin{align}\label{abc100}\notag
\phi_n(\mathbf{t},\mathbf{1},\mathbf{z})&=\bbE_\mu^{(0)}\left[\left(\prod_{i=1}^{n+2}f\left(\tau_{(\mathbf{1\cdot z})_{[i-1]}}\eta_{t-t_{i-1}}\right)\right)\right]= \bbE_\mu^{(0)}\left[\left(\prod_{i=0}^{n+1}f\left(\tau_{(\mathbf{1\cdot z})_{[i]}}\eta_{t-t_{i}}\right)\right)\right]
\\\notag
&=\bbE_\mu^{(0)}\left[\left(\prod_{i=0}^{n+1}f\left(\tau_{(\mathbf{1\cdot z})_{[i]}}\eta_{t_{i}}\right)\right)\right]
=\bbE_\mu^{(0)}\left[\left(\prod_{l=0}^{n+1}f\left(\tau_{(\mathbf{1\cdot z})_{[n+1-l]}}\eta_{t_{n+1-l}}\right)\right)\right]
\\
&=\bbE_\mu^{(0)}\left[\left(\prod_{l=0}^{n+1}f\left(\tau_{(\mathbf{1\cdot z})_{[n+1-l]}}\eta_{t-t^*_l}\right)\right)\right],
\end{align}
where the third identity follows by reversibility, and the last one by the mapping $\mathbf{t}\mapsto \mathbf{t}^*$ since $t_l^*=t- t_{n+1-l}$.

Next, note that 
\begin{equation}\notag
(\mathbf{\mathbf{1}\cdot  z})_{[n+1-l]} = z_1+ \dots z_{n+1-l} =- ( \bar z_{n+1}+ \bar z_{n}+ \dots+ \bar z_{l+1})= b +
(\mathbf{\mathbf{1}\cdot  \bar z})_{[l]},
\end{equation}
where $b:= - \sum_{j=1}^{n+1}\bar z_j$.
Hence, from \eqref{abc100} and translation invariance,  we get

\begin{align*}\notag
\phi_n(\mathbf{t},\mathbf{1},\mathbf{z})&
=\bbE_\mu^{(0)}\left[\left(\prod_{l=0}^{n+1}f\left(\tau_{(\mathbf{1\cdot z})_{[n+1-l]}}\eta_{t-t^*_l}\right)\right)\right]
=\bbE_\mu^{(0)}\left[\left(\prod_{l=0}^{n+1}f\left(\tau_b\tau_{(\mathbf{\mathbf{1}\cdot \bar z})_{[l]}}\eta_{t-t^*_l}\right)\right)\right]
\\&
=\bbE_\mu^{(0)}\left[\left(\prod_{l=0}^{n+1}f\left(\tau_{(\mathbf{\mathbf{1}\cdot \bar z})_{[l]}}\eta_{t-t^*_l}\right)\right)\right]
=\phi_n(\mathbf{t}^*,\mathbf{1},\overline{\mathbf{z}}),
\end{align*}
as claimed in \eqref{revStep}.

Finally, for any $n\geq 0$, due to \eqref{changeVar} and \eqref{first}, we can write
 
\begin{align*}
\mu\left(\hat{L}_\e S_\e^{(2n)}(t)j^{(\e)}(\eta)\right)=& 2\e^{2n+2}(-1)^{2n+1}\sum_{{\mathbf{z}\in \{\pm 1\}^{2n+1}}}
\left(\prod_{i=1}^{2n+1}z_i\right)\int_{A_t}\mathrm{d}\mathbf{t}\phi_{2n}(\mathbf{t},\mathbf{1},\mathbf{z})\\
&=-\e^{2n+2}\sum_{{\mathbf{z}\in \{\pm 1\}^{2n+1}}}
\left(\prod_{i=1}^{2n+1}z_i\right)\int_{A_t}\mathrm{d}\mathbf{t}
\left[\phi_{2n}(\mathbf{t},\mathbf{1},\mathbf{z})+ 
\phi_{2n}(\mathbf{t}^*,\mathbf{1},\mathbf{z})\right]
\end{align*}

Therefore, to get the claim in \eqref{second}, it suffices to show that, for any $\mathbf{t}\in A_t$,
\begin{equation}\label{end}
\sum_{{\mathbf{z}\in \{\pm 1\}^{2n+1}}}
\left(\prod_{i=1}^{2n+1}z_i\right)\left[\phi_{2n}(\mathbf{t},\mathbf{1},\mathbf{z})+ 
\phi_{2n}(\mathbf{t}^*,\mathbf{1},\mathbf{z})\right]=0.
\end{equation}

In fact, by using that for $2n$ even, $\prod_{i=1}^{2n+1}\bar{z}_i=-\prod_{i=1}^{2n+1}z_i$, we can write 

\begin{align}
&\sum_{{\mathbf{z}\in \{\pm 1\}^{2n+1}}}
\left(\prod_{i=1}^{2n+1}z_i\right)\left[\phi_{2n}(\mathbf{t},\mathbf{1},\mathbf{z})+ 
\phi_{2n}(\mathbf{t}^*,\mathbf{1},\mathbf{z})\right]
=\sum_{{\mathbf{z}\in \{\pm 1\}^{2n+1}}}\left(\prod_{i=1}^{2n+1}z_i\right)\phi_{2n}(\mathbf{t},\mathbf{1},\mathbf{z})\\&+
\sum_{{\mathbf{z}\in \{\pm 1\}^{2n+1}}}\left(\prod_{i=1}^{2n+1}\bar{z}_i\right)\phi_{2n}(\mathbf{t}^*,\mathbf{1},\mathbf{\bar z})=
\sum_{{\mathbf{z}\in \{\pm 1\}^{2n+1}}}\left(\prod_{i=1}^{2n+1}z_i\right)
[\phi_{2n}(\mathbf{t},\mathbf{1},\mathbf{z})-\phi_{2n}(\mathbf{t}^*,\mathbf{1},\mathbf{ \bar z})],
\end{align}
and as claimed in \eqref{end}, the latter equals to zero  due to \eqref{revStep}.
\end{proof}

We are now in shape to conclude the proof of Theorem \ref{guepe}. 
Lemma \ref{fight} implies the cancellation of the even terms in the expansion of the velocity \eqref{Expvelo}, that is, 
\begin{equation}\label{anti}
v(\e)=2\e(2\mu(\eta(0))-1)+\sum_{n=0}^\infty\int_0^\infty \mu(\hat L_\e S_\e^{(2n+1)}(s)j^{(\e)})ds,
\end{equation}
and from equation \eqref{anti} the claim readily follows.
Indeed, by using \eqref{first} in Lemma \ref{fight}, for any $n\geq 0$, we have that : 
\begin{align*}
&\int_0^\infty ds\mu(\hat L_\e S_\e^{(2n+1)}(s)j^{(\e)})=2\e^{2n+1}\sum_{\mathbf{z}\in \{\pm 1\}^{2n}}
\left(\prod_{i=1}^{2n}z_i\right)\times\\&\times\int_0^\infty ds \int_{A_s}\mathrm{d}\mathbf{t}
\bbE_\mu^{(0)}\left[\left(\prod_{i=1}^{2n+1}\left(2\eta_{t-t_{i-1}}(\sum_{l=1}^{i-1}z_l)-1\right)\right)\right]=:
\e^{2n+1}c_{2n}.
\end{align*} 
By plugging the above expression into \eqref{anti}, we get 
$$v(\e)=2\e(2\mu(\eta(0))-1)+\e\sum_{n=1}^\infty\e^{2n}c_{2n},$$  
which, as claimed, is an antisymmetric function of $\e$.

\subsection{Proof of Proposition~\ref{signature}}\label{proofsignature}
By Lemma \ref{fight} and \eqref{anti}, it suffices to show that 
\begin{equation}\label{end2}\int_0^\infty \mu(\hat L_\e S_\e^{(1)}(s)j^{(\e)})ds=\e^3\kappa,\end{equation}
with $\kappa$ as in \eqref{k}.
To shorten the computations, we abbreviate $f(\eta):=2\eta(0)-1$ and $h(\eta):=2[\eta(1)-\eta(-1)]$.
Compute first
\begin{equation*}
\begin{split} 
\mu(\hat L_\e S_\e^{(1)}(s)f) &
=\e\mu(f S_\e^{(1)}(s)h)
=-\e\mu\left(h\,S_\e^{(1)}(s)f\right)
=-\e\int_0^sdu\,\mu\left(h    {\cdot  [S(s-u)\hat L_\e S(u)f]}\right)\\
&=-\e\int_0^sdu\,\mu\left([S(s-u)h]\,[\hat L_\e S(u)f]\right)
= -\e^2\int_0^sdu\,\mu\left(  { [ S(s-u)h] f [ S(u)h ]}\right),
\end{split}
\end{equation*}
where  we have used the definition of $\hat L_\e$ in   {\eqref{michele}},  translation invariance, 
 the  definition of $S_\e^{(1)}(s)$, reversibility    {and Assumption~\ref{piqure}--(iii)}.

Hence
\begin{eqnarray}\notag 
&\int_0^\infty\mu(\hat L_\e S_\e^{(1)}(s)f)ds=-\e^2\int_0^\infty ds\int_0^sdu\,\mu\left(f   {[S(s-u)h] [S(u)h]}\right)\\&\notag
=- \e^2  \int_0^\infty du\int_0^\infty dz\, \mu\left(f\,S(z)h\,S(u)h\right)
=-  \e^2\mu\Big(f\Big[\int_0^\infty ds\,S(s)h\Big]^2\Big)\\&
=- 4 \e^2\mu\Big(f\Big[\int_0^\infty ds\,\bbE_{\eta}^{(0)}[\eta_1(s)-\eta_{-1}(s)]\Big]^2\Big)\label{expressionkappa} .
\end{eqnarray}
 Finally, \eqref{end2} follows by recalling that $j^{(\e)}(\eta)=2\e f(\eta)$ and \eqref{expressionkappa} above.

\subsection{Proof of Proposition~\ref{shaun}}\label{proofshaun}
From \eqref{expressionkappa}, we have
\begin{equation}
\k=-2\int_0^\infty dt\int_0^\infty ds\ \nu_\rho\left(h\cdot S(t)\left(fS(s)h\right)\right).
\end{equation}
Recall that $p_t(y)$ is the probability that a continuous time SRW started from $0$ is at $y$ at time $t$. Using that $S(t)g(\eta)=\sum_{y\in\bbZ}p_t(y)\bbE_\eta^{\mathrm{east}}\left[g(\tau_y\eta_t)\right]$ and the Markov property, we can rewrite the term in the double integral in the above expression as $A-B-C+D$, where
\begin{eqnarray}
A&=&\sum_{y,z\in\bbZ}p_t(y)p_s(z)\bbE_{\nu_\rho}^{\mathrm{east}}\left[\eta(1)\left(2\eta_t(y)-1\right)\eta_{t+s}(z+y+1)\right]\\
B&=&\sum_{y,z\in\bbZ}p_t(y)p_s(z)\bbE_{\nu_\rho}^{\mathrm{east}}\left[\eta(-1)\left(2\eta_t(y)-1\right)\eta_{t+s}(z+y+1)\right]\\
C&=&\sum_{y,z\in\bbZ}p_t(y)p_s(z)\bbE_{\nu_\rho}^{\mathrm{east}}\left[\eta(1)\left(2\eta_t(y)-1\right)\eta_{t+s}(z+y-1)\right]\\
D&=&\sum_{y,z\in\bbZ}p_t(y)p_s(z)\bbE_{\nu_\rho}^{\mathrm{east}}\left[\eta(-1)\left(2\eta_t(y)-1\right)\eta_{t+s}(z+y-1)\right].
\end{eqnarray}
Lemma \ref{orientation} shows that the expectations appearing in $A$ (resp. $B$, resp. $C$, resp. $D$) cancel as soon as $1,y,z+y+1 $ (resp. $-1,y,z+y+1$, resp. $1,y,z+y-1$, resp. $-1,y,z+y-1$) are pairwise distinct. The same holds in the cases where $y<1$ and $z+y+1=1$ (resp. $y<-1$ and $z+y+1=-1$, etc). In fact we only have to deal with terms where the three space-time points involved in the expectation are in one of the six schematic configurations of Figure~\ref{six}.
\begin{figure}
\begin{center}
\includegraphics[scale=.4]{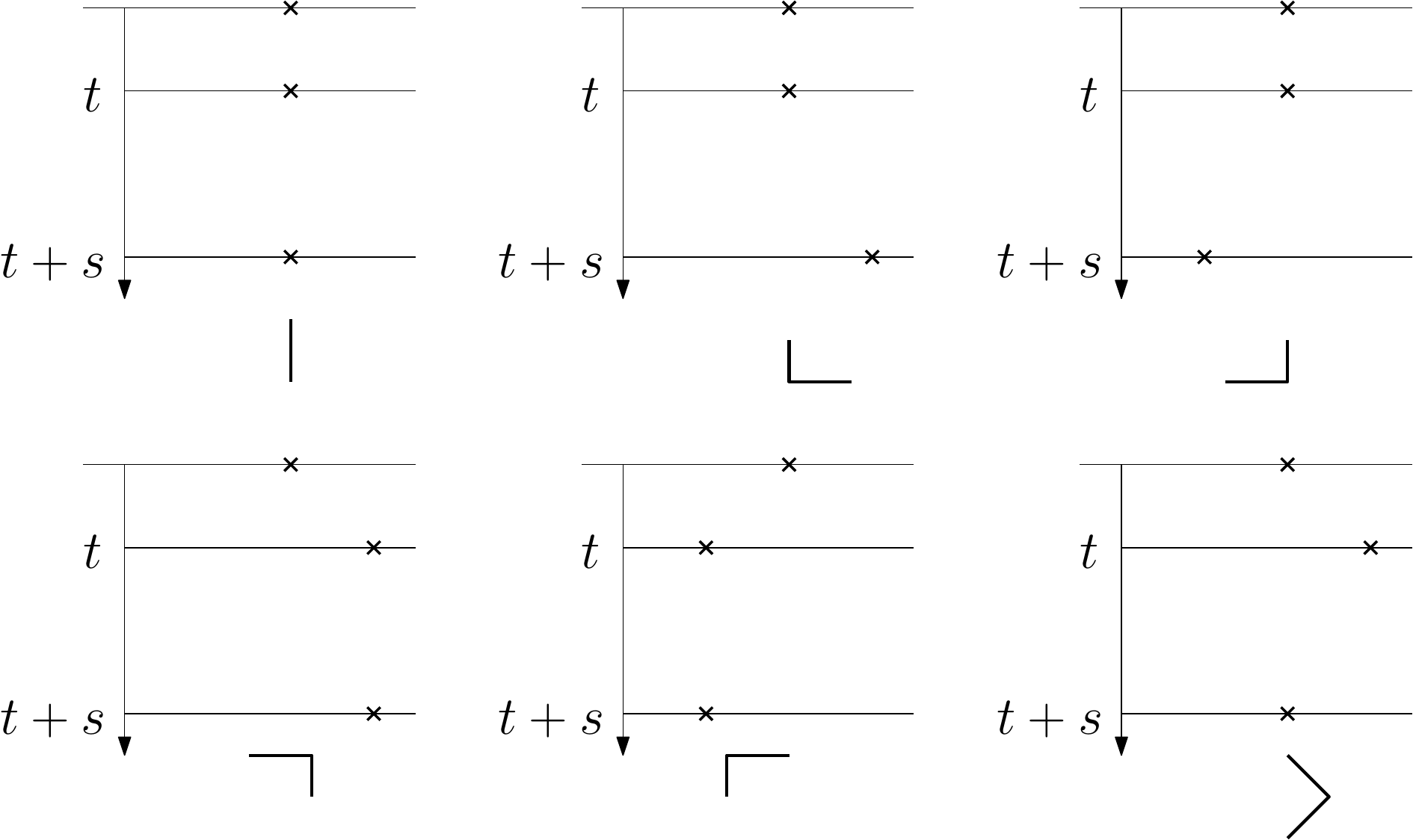} 
\caption{The only non-zero terms in $A$ are those where $(1,0),(y,t)$ and $(y+z+1,t+s)$ are in one of the six above respective positions (marked by crosses). Similarly for the non-zero terms in $B,C,D$.}
\label{six}
\end{center}
\end{figure}
We gather the terms corresponding to the different positions in $A_|,A_\urcorner,A_\ulcorner,A_\llcorner,A_\lrcorner,A_\rangle$ (and similarly for terms in $B,C,D$) so that
\begin{eqnarray}
A_|&=&p_t(1)p_s(-1)\bbE_{\nu_\rho}^{\mathrm{east}}\left[\eta(1)\left(2\eta_t(1)-1\right)\eta_{t+s}(1)\right],\\
A_\urcorner &=& \sum_{z\geq 0}p_t(1)p_s(z)\bbE_{\nu_\rho}^{\mathrm{east}}\left[\eta(1)\left(2\eta_t(1)-1\right)\eta_{t+s}(2+z)\right],\\
A_\ulcorner &=& \sum_{z\leq -2}p_t(1)p_s(z)\bbE_{\nu_\rho}^{\mathrm{east}}\left[\eta(1)\left(2\eta_t(1)-1\right)\eta_{t+s}(2+z)\right],\\
A_\llcorner &=&\sum_{y\geq 2}p_t(y)p_s(-1)\bbE_{\nu_\rho}^{\mathrm{east}}\left[\eta(1)\left(2\eta_t(y)-1\right)\eta_{t+s}(y)\right],\\
A_\lrcorner &=&\sum_{y\leq 0}p_t(y)p_s(-1)\bbE_{\nu_\rho}^{\mathrm{east}}\left[\eta(1)\left(2\eta_t(y)-1\right)\eta_{t+s}(y)\right],\\
A_\rangle &=&\sum_{y\geq 2}p_t(y)p_s(-y)\bbE_{\nu_\rho}^{\mathrm{east}}\left[\eta(1)\left(2\eta_t(y)-1\right)\eta_{t+s}(1)\right].\\
\end{eqnarray}
Translation invariance and the symmetry of the heat kernel imply that
\begin{eqnarray}
A_|=B_|=C_|=D_|,\\
A_\urcorner=B_\urcorner, C_\urcorner=D_\urcorner,\\
A_\ulcorner=B_\ulcorner, C_\ulcorner=D_\ulcorner,\\
A_\llcorner+A_\lrcorner=B_\llcorner+B_\lrcorner, C_\llcorner+C_\lrcorner=D_\llcorner+D_\lrcorner,
\end{eqnarray}
so that many terms cancel and the sum reduces to
\begin{equation}
A-B-C+D=A_\rangle-B_\rangle-C_\rangle+D_\rangle.
\end{equation}
Rearranging this expression by using translation invariance, we get that $A-B-C+D$ is nothing but
\begin{equation}
\sum_{y\geq 1}\left(p_t(y+1)-p_t(y-1)\right)\left(p_s(y+1)-p_s(y-1)\right)\bbE_{\nu_\rho}^{\mathrm{east}}\left[\eta(0)\left(2\eta_t(y)-1\right)\eta_{t+s}(0)\right].
\end{equation}
The claim follows by noticing that for $s,t>0$, $y\geq 1$,
\begin{equation}
\left(p_t(y+1)-p_t(y-1)\right)\left(p_s(y+1)-p_s(y-1)\right)>0.
\end{equation}

\subsection{Proof of Proposition~\ref{DegDrift}}\label{proofDegDrift}
The result is a consequence of Lemma 3.2 in \cite{O2}. Let us just recall which process is considered in \cite{O2} and show how it can be coupled with our walker.   {We refer to Appendix \ref{A} for some standard terminology concerning the East model}.

In \cite{O2}, the author studies the evolution of a single hole, which is called \emph{the front}, through the East dynamics. 
The ``front process'' $(F_t)_{t\geq 0}$ is constructed as follows.
Start with any configuration $\eta\in\O$ with a hole at site $k$, i.e. $\eta(k)=0$. Let $F_0:=k$. As long as the Poisson clocks attached to the sites $F_t$ and $F_t-1$ do not ring, the front process does not jump. If there is a legal ring (see Definition~\ref{GraphRepr}) at $F_{t^-}$ at time $t$ \emph{and} the associated Bernoulli variable is a $1$ ({i.e.} $F_{t^-}$ is filled with a particle   { at time $t$}), set $F_t:=F_{t^-}+1$. Note that since the ring is assumed to be legal, the configuration at site $F_t$ is still a hole. If there is a (necessarily legal) ring at $F_{t^-}-1$ at time $t$ \emph{and} the associated Bernoulli variable is a $0$ ({i.e.}   {the site $F_{t^-}-1$ has a hole at time $t$}), set   {$F_t:=F_{t-}-1$}. Note that this front process is always on a zero of the configuration ($\eta_t(F_t)=0$ for all $t\geq 0$).\footnote{In \cite{O2} we actually start with a configuration entirely filled to the left of the initial position of the front. The front is then at any time the left-most zero of the system. Due to the orientation of the East dynamics, however, the above definition gives a process with exactly the same properties.} Lemma 3.2 in \cite{O2}, together with Borel-Cantelli lemma, says that the front moves at least with negative linear velocity asymptotically. More precisely, there exists a constant $\underline{v}>0$ such that
\begin{equation}\label{negfront}
\limsup_{t\rightarrow \infty} \frac{F_t}{t}\leq -\underline{v} \quad\text{a.s.}
\end{equation}

It is not difficult to see, using the same graphical construction of the underlying East dynamics for the degenerate walker and the front process, that if we choose initially
\begin{equation*}
F_0=Y_0+1/2,
\end{equation*}
then, for all time $t\geq 0$,
\begin{equation*}
F_t\geq Y_t+1/2.
\end{equation*}
In view of this coupling and \eqref{negfront}, the thesis is readily obtained.

\subsection{Proof of Proposition~\ref{denso}}\label{proofdenso}

 By Theorem \ref{chorale}, for any   { $f\in L^2(\mu )$}, we have that
\begin{equation}\label{sotto}
\mu_\e(f)=\mu(f)+\int_0^\infty \mu\left(\hat L_\e S(s)f\right)ds+O(\e^2).
\end{equation}
Now note that by definition $S(t)f(\eta)=\bbE_\eta^{(0)}\left[f(\eta_t)\right] =\sum_{y\in\bbZ}   {p_t(y)}\bbE^{\rm env}_\eta\left[f(\tau_y\eta_t)\right]$. 
By means of this observation, together with the definition of $\hat L_\e$ and translation invariance, we can write
\begin{eqnarray*}\notag
\mu \left(\hat L_\e S(s)f\right)&=&\e\mu \left(\left(2\eta(0)-1\right)\left[S(s)f(\tau_1\eta)-S(s)f(\tau_{-1}\eta)\right]\right)\\ \notag
&=&\e\mu \left(\left(2\eta(0)-1\right)\bbE_\eta^{(0)}\left[f(\tau_1\eta_s)-f(\tau_{-1}\eta_s)\right]\right)\\ 
&=&2\e\sum_{y\in\bbZ}   {p_s(y)}\mu\left(\eta(0)\bbE^{\rm env}_\eta\left[f(\tau_{y+1}\eta_s)-f(\tau_{y-1}\eta_s)\right]\right).
\end{eqnarray*}
By combining the expression above with \eqref{sotto}, we get \eqref{boh}.

\subsection{Proof of Corollary~\ref{oscar}}\label{proofoscar}
Equation \eqref{limitdensity} follows by plugging the function $f(\eta):=\eta(x)$ in \eqref{boh}   {as we explain}. The integrand in \eqref{boh} then equals
\begin{equation}\label{marcello}
\sum_{y\in\bbZ}p_s(y)\nu_{\rho}\left(\xi(0)\bbE^{\rm east}_\xi\left[\xi_s(x+y+1)-\xi_s(x+y-1)\right]\right).
\end{equation}
To simplify the above expression we first observe  that  $\nu_{\rho}\left(\xi(0)\bbE^{\rm east}_\xi\left[\xi_{s}(y)\right]\right)=\rho^2$ for any $y\neq 0$ (cf. Lemma \ref{orientation}   {below}). So that
\begin{eqnarray*}
\eqref{marcello}&=&\rho^2\sum_{y\neq -x-1}p_s(y)+p_s(-x-1)\nu_{\rho}\left(\xi(0)\bbE^{\rm east}_\xi\left[\xi_s(0)\right]\right)\\
&&\quad-\,\rho^2\sum_{y\neq -x+1}p_s(y)-p_s(-x+1)\nu_{\rho}\left(\xi(0)\bbE^{\rm east}_\xi\left[\xi_s(0)\right]\right)\\
&=&\left[p_s(x-1)-p_s(x+1)\right]\left[\rho^2-\nu_{\rho}\left(\xi(0)\bbE^{\rm east}_\xi\left[\xi_s(0)\right]\right)\right]\,.
\end{eqnarray*}
  {Recalling that  $u(s) =\rho^2-\nu_{\rho}\left(\xi(0)\bbE^{\rm east}_{\xi}\left[\xi_s(0)\right]\right)$ and coming back to \eqref{boh},} one gets \eqref{limitdensity}.

\medskip
  {It remains to analyze the function $u$}.
Clearly, $u(0)=-\rho(1-\rho)$. 
Moreover, setting $g(\eta):= \eta(0)-\rho$,    by reversibility  we get
\[ |u(s)|= |\nu_\rho \left( g (\xi) \bbE_\xi ^{\rm east} [g(\xi_s)] \right) |=
\nu_\rho(  [S^{\rm east}(s/2) g) ]^2) \leq e^{- s\l} \nu_\rho (g^2) = \rho (1-\rho) e^{-s \l}
\]
where $\l$  and  $S^{\rm east}(\cdot)$ denote respectively the  spectral gap  and the Markov semigroup of the East process. 

Finally, let us focus on the sign and the growth of $u$. Call $T_0$ the time of the first \emph{legal ring} at $0$, see Definition \ref{GraphRepr}.
\begin{equation}\label{aduc}
\begin{split}
\nu_{\rho}\left(\xi(0)\bbE^{\rm east}_\eta[\xi_s(0)]\right)&= \nu_{\rho}\left(\xi(0)\bbE^{\rm east}_\xi[\xi_s(0)|T_0\leq s]\mathbb{P}^{\rm east}_\xi(T_0\leq s)\right)\\
&+\,\nu_{\rho}\left(\xi(0)\bbE^{\rm east}_\xi[\xi_s(0)|T_0> s]\mathbb{P}^{\rm east}_\xi(T_0> s)\right)\,.
\end{split}
\end{equation}
On the one hand, if $T_0>s$, $\xi_s(0)=\xi_0(0)$ (if there has been no legal ring at the origin, by definition the configuration at $0$ has not been updated). So that the second term is $\nu_{\rho}\left(\xi(0)\mathbb{P}^{\rm east}_\xi(T_0> s)\right)$, which in turn is just $\rho\mathbb{P}^{\rm east}_{\nu_{\rho}}(T_0> s)$, since the event $\lbrace T_0>s\rbrace$ does not depend on $\xi(0)$. On the other hand, the first term can be rewritten as $\nu_{\rho}\left(\xi(0)\rho\mathbb{P}^{\rm east}_\xi(T_0\leq s)\right)$    { by distinguishing the zero that is on site $1$ at time $T_0$ (the fact that $T_0$ is the time of a legal ring at $0$ ensures its existence,
 cf. \cite{AD,CMRT}   for the definition of the distinguished zero and for its properties).} 
 Again by orientation of the model,   {the last mean}  is equal to   {$\rho^2\mathbb{P}^{\rm east}_{\nu_\rho} (T_0\leq  s)$}.   { Due to the previous observations, \eqref{aduc} equals
 $ \rho(1-\rho) \mathbb{P}^{\rm east}_\xi(T_0> s)+ \rho^2 $, hence 
 \begin{equation}
u(s)=-\rho(1-\rho)\mathbb{P}^{\rm east}_{\nu_{\rho}}(T_0> s)
\end{equation}
 } 
  It remains to notice that $\mathbb{P}^{\rm east}_{\nu_{\rho}}(T_0> s)$ is a quantity decreasing with $s$, so that
   {$u(s)$}
is indeed negative and increasing in $s$.

\appendix

\section{Useful facts on the East model}\label{A}
\begin{definition}{\bf (Graphical representation of the East model)}\label{GraphRepr}
Starting from a configuration $\eta\in\Omega$, the East dynamics $(\eta_t)_{t\geq 0}$ can be constructed as follows. With every $x\in\bbZ$ independently we associate a Poisson process with parameter $1$ that will be called the (Poisson) clock at $x$. The process can then be constructed in the following   {way}:
\begin{itemize}
\item Check the constraint: if the clock at site $x$ rings at time $t$, look at the constraint at $x$ in $\eta_t$, the configuration at time $t$.
\item If $c^{\text{east}}_x(\eta_t)=1$, the constraint is satisfied and the occupation variable at site $x$ is replaced by a Bernoulli variable of parameter $\rho$ independent of all the rest. The ring at time $t$ is said to be a \emph{legal ring}.
\item If $c^{\text{east}}_x(\eta_t)=0$, the constraint is not satisfied and the system is left unchanged.
\end{itemize}
\end{definition}

The following lemma is a consequence of reversibility 
and the orientation property of the East model which we use to prove Proposition~\ref{shaun}.
  { We write  $\bbE^{\rm east}_{\eta}$ for  the expectation of the East dynamics starting at $\eta$, and  we define $\bbE^{\rm east}_{\nu_\rho} $ similarly}.
\begin{Lemma}\label{orientation}
The following  holds:
\begin{enumerate}
\item Let $0\leq t_1\leq t_2\cdots \leq t_k$ and   {let  $f_1,...,f_k$ be  functions on $\{0,1\}^\bbZ$  
such that   the    convex envelopes  of their supports  ${\rm Conv}( Supp(f_{1}) ), \dots, {\rm Conv} ( Supp(f_k) )$  are disjoint. }
Then 
\begin{eqnarray*}
\bbE^{\rm east}_{\nu_\rho}\left[f_1\left(\eta_{t_1}\right)\ldots f_k\left(\eta_{t_k}\right)\right]=\prod_{i=1}^k\nu_\rho\left(f_i\right).
\end{eqnarray*} 
\item Let $0\leq t_1\leq t_2 \leq \cdots \leq t_k$, $f_1,...,f_k$    functions on    {$\{0,1\}^\bbZ$} and   $i_0\in\{1,...,k\}$ such that    { $\nu_\rho(f_{i_0})=0$ and $x <y$  for all $ x\in Supp(f_{i_0})$ and $   y\in\cup_{i\neq i_0}Supp(f_i)$} ({i.e.} the support of $f_{i_0}$ is to the left of all the other supports). Then
\begin{equation}
\bbE^{\rm east}_{\nu_\rho}\left[f_1\left(\eta_{t_1}\right)\ldots f_k\left(\eta_{t_k}\right)\right]=0.
\end{equation}
\end{enumerate}
\end{Lemma}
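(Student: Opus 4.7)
The plan relies on two structural properties of the East model that follow from the one-sided orientation of the constraint $c_x^{\rm east}(\xi)=1-\xi(x+1)$. First, in the graphical construction of Definition~\ref{GraphRepr}, the dynamics restricted to any right half-line $[M,\infty)$ is autonomous: $(\eta_t(x))_{t\geq 0,\,x\geq M}$ is a deterministic function of $\eta_0|_{[M,\infty)}$ together with the Poisson clocks and Bernoulli marks at sites $\geq M$. Second, given this right-side data, the complementary process on $(-\infty,M-1]$ is a time-inhomogeneous East-type process whose only link to the right is through the scalar boundary value $v(t):=\eta_t(M)$ entering the constraint at site $M-1$; since $c_{M-1}=1-v(t)$ is configuration-independent, detailed balance for the East rates holds at every instant, so $\nu_\rho|_{(-\infty,M-1]}$ is invariant in time regardless of the boundary trajectory $v$. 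In particular the one-time marginal is always the restricted product measure.

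For part~(1), the plan is to order the supports spatially as $S_{\sigma(1)}<\cdots<S_{\sigma(k)}$ (possible because their convex envelopes are disjoint), set $M_j:=\min S_{\sigma(j)}$, and let $\cG_j$ be the $\sigma$-algebra generated by $\eta_0|_{[M_j,\infty)}$ and by the graphical data at sites $\geq M_j$ up to time $t_k$. Autonomy makes $f_{\sigma(l)}(\eta_{t_{\sigma(l)}})$ measurable w.r.t.\ $\cG_j$ for every $l\geq j$. Conditioning on $\cG_2$ therefore gives
\[\bbE^{\rm east}_{\nu_\rho}\!\Big[\prod_{l=1}^k f_{\sigma(l)}(\eta_{t_{\sigma(l)}})\Big]=\bbE^{\rm east}_{\nu_\rho}\!\Big[\prod_{l\geq 2} f_{\sigma(l)}(\eta_{t_{\sigma(l)}})\cdot\bbE^{\rm east}_{\nu_\rho}\bigl[f_{\sigma(1)}(\eta_{t_{\sigma(1)}})\mid\cG_2\bigr]\Big],\]
and the inner conditional expectation is the expectation of $f_{\sigma(1)}$ in a restricted East dynamics on $(-\infty,M_2-1]$ with a time-varying boundary at $M_2$, started from $\nu_\rho|_{(-\infty,M_2-1]}$ independently of $\cG_2$. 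By the single-time invariance above, this equals the constant $\nu_\rho(f_{\sigma(1)})$. Iterating the same step with $\cG_3,\cG_4,\dots,\cG_k$ peels off $\nu_\rho(f_{\sigma(2)}),\dots,\nu_\rho(f_{\sigma(k-1)})$ in turn, and a final application of stationarity to $f_{\sigma(k)}$ yields $\prod_i\nu_\rho(f_i)$.

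Part~(2) is a one-step instance of exactly the same procedure. Set $M':=\min\bigl(\bigcup_{i\neq i_0}\mathrm{Supp}(f_i)\bigr)$ and condition on the data on $[M',\infty)$: each $f_i(\eta_{t_i})$ with $i\neq i_0$ becomes measurable by autonomy, while $f_{i_0}(\eta_{t_{i_0}})$ is a functional of the restricted East dynamics on $(-\infty,M'-1]$ started from $\nu_\rho|_{(-\infty,M'-1]}$. Single-time invariance forces its conditional expectation to be $\nu_\rho(f_{i_0})=0$, which kills the whole product.

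The subtle point is that the restricted East dynamics with a time-varying boundary does \emph{not} carry a product law across several times --- only its one-time marginals are $\nu_\rho$. This is why the peeling must be done one functional at a time, always choosing the spatially leftmost remaining one, so that the innermost conditional expectation involves a single $f_i$ and can be evaluated purely from the boundary-independent single-time invariance of $\nu_\rho$.
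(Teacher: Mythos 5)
Your argument is correct, and it takes a genuinely different route from the paper's. The paper proves part (2) first and deduces part (1) from it by iteration: it uses reversibility to build a two-sided stationary version of the East process, shifts time so that $f_{i_0}$ is evaluated at time zero, rewrites the expectation as $\nu_\rho\bigl(f_{i_0}(\eta)\,\bbE^{\rm east}_{\eta}[\prod_{i\neq i_0}f_i(\eta_{t_i-t_{i_0}})]\bigr)$, and then invokes the orientation property (forwards, and backwards via time reversal) to see that the conditional factor depends only on coordinates strictly to the right of the support of $f_{i_0}$, so that product-measure independence at a single time kills the term. You instead never reverse time: by conditioning on the $\sigma$-algebra generated by the autonomous right half-space data over the whole time horizon, factors at times both before and after the one being peeled are handled simultaneously, and the key input becomes the fact that, given the boundary trajectory, the left half-line evolves as a (piecewise time-homogeneous) constrained dynamics whose constraint at the boundary site is configuration-independent, so the restricted Bernoulli measure is preserved at every time; this yields part (1) directly by leftmost-first peeling and part (2) as a one-step case. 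What each buys: the paper's proof is shorter and needs only stationarity plus a single-time independence statement, at the price of using reversibility in an essential way; yours makes the underlying mechanism (invariance of $\nu_\rho$ for the half-line East dynamics with an arbitrary time-varying boundary, a hallmark of kinetically constrained models) explicit, would extend to oriented but non-reversible environments with the same invariance property, and correctly identifies why the peeling must proceed one leftmost factor at a time --- but it requires a bit more care about the conditional time-inhomogeneous Markov structure and the well-posedness/autonomy of the graphical construction, points you assert at the same level of rigor as the paper itself.
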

\begin{proof}
The first statement is a consequence of the second one by iteration (let $i_0$ be the index of the function with left-most support and apply the second statement replacing $f_{i_0}$ by $f_{i_0}-\nu_\rho(f_{i_0})$).

%
 Notice that by    reversibility we can construct the process at equilibrium also for negative times by mirroring the graphical construction. The process obtained is invariant by time translation. In particular, we have 
\begin{eqnarray*}
\bbE^{\rm east}_{\nu_\rho}\left[f_1\left(\eta_{t_1}\right)\ldots f_k\left(\eta_{t_k}\right)\right]&=&\bbE^{\rm east}_{\nu_\rho}\left[f_1\left(\eta_{t_1-t_{i_0}}\right)\ldots f_k\left(\eta_{t_k-t_{i_0}}\right)\right]\\
&=&\nu_\rho\Big (f_{i_0}(\eta)\bbE^{\rm east}_{\eta}\Big[\prod_{i\neq i_0}f_i\Big(\eta_{t_i-t_{i_0}}\Big)\Big]\Big)\,.
\end{eqnarray*}
Now notice that $\bbE^{\rm east}_{\eta}\left[\prod_{i\neq i_0}f_i\left(\eta_{t_i-t_{i_0}}\right)\right]$ has disjoint support from $f_{i_0}$ thanks to the orientation property of the East model. The two terms  in the    {$\nu_\rho$--mean} are therefore decorrelated. Hence the result.
\end{proof}

{\bf Acknowledgements}. The authors thank P. Thomann for providing the numerical experiments in Figures \ref{pSpeed} and \ref{dens0.5}.
L. Avena has been supported by NWO Gravitation Grant 024.002.003-NETWORKS.

\end{document}